\newcommand{\tfa}{time-frequency analysis}
\newtheorem{theorem}{Theorem}[section]
\newtheorem{proposition}[theorem]{Proposition}
\newtheorem{remark}[theorem]{Remark}
\newtheorem{Alg}[theorem]{Algorithm}
\theoremstyle{definition}
\newcommand{\beqa}{\begin{eqnarray*}}
\newcommand{\eeqa}{\end{eqnarray*}}
\newcommand{\field}[1]{\mathbb{#1}}
\newcommand{\bR}{\field{R}}        
\newcommand{\bN}{\field{N}}        
\newcommand{\bZ}{\field{Z}}        
\def\al{\alpha}                    
\def\G{\mathcal{G}}
\def\la{\lambda}
\def\th{\theta}
\def\Ghframe{\mathcal{G}^\hbar(\f,\Lambda)}
\def\cS{\mathcal{S}}
\def\cC{\mathcal{C}}
\def\a{\aleph}
\def\rd{\bR^d}
\def\rdd{{\bR^{2d}}}
\def\lrd{L^2(\rd)}
\def\zd{\bZ^d}
\def\intrd{\int_{\rd}}
\def\intrdd{\int_{\rdd}}
\def\L{\Big(}
\def\R{\Big))}
\def\<{\left<}
\def\>{\Big)>}
\def\mv1{M_v^1}
\def\phas{(x,\xi)}
\def\mn{(m,n)}
\def\mn'{(m',n')}
\def\a{\alpha}
\def\t{\tau}
\def\R{\mathbb{R}}
\def\Ren{\mathbb{R}^d}
\def\tauhz0{\widehat{\mathcal{T}}^\hbar(z_0)}
\def\tauhz{\widehat{\mathcal{T}}^\hbar(z)}
\def\f{\varphi}
\def\Sn2{S_{2}(L^{2}(\Ren))}
\def\S1{S_{1}(L^{2}(\Ren))}
\def\sig00{\sigma_{0,0}}
\def\th{\widehat{\mathcal{T}}^\hbar}
\def\t{\widehat{\mathcal{T}}}
\def\mhm{\widehat{M}_{\hbar^{-1/2}}}
\def\mhp{\widehat{M}_{\hbar^{1/2}}}
\def\st{\widehat{S_t}}
\def\sht{\widehat{S_t}^\hbar}
\def\la{\langle}
\def\ra{\rangle}
\newcommand{\A}{\mathcal{A}}
\newcommand{\Ha}{\widehat{H(t)}} 
\newcommand{\Tr}{\mathrm{Tr}} 
\begin{document}
\begin{abstract}
\medskip The present paper is devoted to the semiclassical analysis of  linear Schr\"odinger equations from a Gabor frame perspective. We consider (time-dependent) smooth Hamiltonians  with at most quadratic growth. Then we construct higher order parametrices for the corresponding Schr\"odinger equations by means of $\hbar$-Gabor frames, as recently defined by M. de Gosson, and we provide precise $L^2$-estimates of their accuracy,  in terms of the Planck constant $\hbar$. Nonlinear parametrices, in the spirit of the nonlinear approximation, are also presented.
Numerical experiments are exhibited to compare our results with the early literature.
 \medskip
\end{abstract}

\title{Gabor Frames of Gaussian Beams for the Schr\"odinger equation}

\author{Michele Berra, Martina Bulai, Elena Cordero,  and Fabio Nicola}
\address{Universit\`a di Torino, Dipartimento di Matematica, via Carlo Alberto 10, 10123 Torino, Italy}
\email{michele.berra@unito.it}
\address{Universit\`a di Torino, Dipartimento di Matematica, via Carlo Alberto 10, 10123 Torino, Italy}
\email{elena.cordero@unito.it}
\address{Universit\`a di Torino, Dipartimento di Matematica, via Carlo Alberto 10, 10123 Torino, Italy}
\email{iuliamartina.bulai@unito.it}
\address{Dipartimento di Scienze Matematiche,
Politecnico di Torino, corso Duca degli Abruzzi 24, 10129 Torino,
Italy}
\email{fabio.nicola@polito.it}

\subjclass[2010]{42C15,35S30,47G30}
\keywords{Gabor frames, Gaussian Beams,  Schr\"odinger equation, metaplectic operators}
\maketitle

\section{Introduction}

The goal of this paper is  to construct asymptotic solutions for Schr\"odinger   equations
  \begin{equation}\label{C1}
\begin{cases} i \hbar
\partial_t u =\Ha u\\
u(0)=u_0,
\end{cases}
\end{equation}
by means  of Gabor frames in the semi-classical regime ($\hbar\to 0^+)$. Here $t\in[0,T]$, the initial condition $u_0\in\lrd$ and the quantum Hamiltonian $\Ha$ is supposed to be the $\hbar$-Weyl quantization of the classical observable $H(t,X)$, with $X=\phas\in\rdd$.
\subsection{Literature overview}
There are many results about asymptotic solutions for partial differential equations (PDE's), especially when the initial value is a wave packet, i.e. it is well localized in the physical space and it oscillates with an approximately constant frequency. In particular, if the initial profile is Gaussian (a coherent state), the solution will be highly concentrated along the classical trajectory, according to the correspondence principle. Such a semi-classical analysis for Schr\"odinger-type equations were widely studied in several papers, see e.g. \cite{ComberscureRobert2012,deGossonbook,deGossonACHA,hagedorn1980semiclassical,hagedorn1981semiclassical,klaudercoherent,robert2004propagation} and the textbooks \cite{combescure2012quadratic,GS2,GS1,GS3,Zworski}.

\par
The natural idea of this work is to decompose the initial value $u_0$ in \eqref{C1} by means of a $\hbar$-Gabor frame whose atoms are Gaussian coherent states, construct asymptotic solutions for each of them, a so-called \emph{Gaussian beam}, and finally by superposition obtain the asymptotic solution to \eqref{C1}. The main issues are the following:
\begin{itemize}
               \item  Construction of a parametrix via Gabor frames
                \item Estimates in $L^2$ for the parametrix and the error term
                \item Numerical results.
              \end{itemize}
Despite the simplicity of the idea, we do not know a fully rigorous treatment of this matter. There are various attempts (see e.g.\ \cite{MR3161186} and references therein) where however several arguments are carried out only at a heuristic level and with numerical experiments. The present paper is devoted to a rigorous study of these issues for a class of smooth Hamiltonians with at most quadratic growth and, unlike the previous work, we address from the beginning a finer analysis, that is higher order approximations: the approximate solution is searched as a (finite) sum of powers of $\hbar$, and the order of  approximation can be arbitrarily large.
\subsection{Notation and ($\hbar$-)Gabor frames} To be explicit, let us fix some notation.\par
  The $\hbar$-Weyl quantization of a function $H$ on the phase space $\rdd$ is formally defined by
\begin{equation}\label{hWeyl}
\widehat{H}u(x)=Op^w_{\hbar}[H]u(x)=(2\pi \hbar)^{-d}\intrdd e^{i\hbar^{-1}(x-y)p } H\Big(\frac{x+y}2,p\Big) u(y)\,dydp
\end{equation}
for every $u$ in the Schwartz space $\cS(\rd)$. The function $H$ is called the $\hbar$-Weyl symbol of $\widehat{H}$.
For $z_0=(x_0,p_0)\in\rdd$, we define by $\widehat{\mathcal{T}}^\hbar(z_0)$ the Weyl operator
\begin{equation}\label{hT}
\widehat{\mathcal{T}}^\hbar(z_0)u(x)=Op^w_{\hbar}[e^{i\hbar^{-1}(p_0 x-x_0p)}]u(x)= e^{i\hbar^{-1}(p_0x-x_0p_0/2)}u(x-x_0).
\end{equation}
Such operator meets the definition of the so-called $\hbar$-Gabor frames, introduced in \cite{deGossonACHA} as generalizations of Gabor frames.
For a given  lattice $\Lambda$ in $\rdd$ and a non-zero square integrable function $\f$ (called window) on $\rd$   the system
$$\mathcal{G}^\hbar(\f,\Lambda)=\{\tauhz\f\,:\,z\in\Lambda\}
$$
is called a $\hbar$-Gabor frame if it is a frame for $\lrd$, that is there exist constants $0<a\le  b$ such that
\begin{equation}\label{framedef}
a\|f\|_2^2\leq\sum_{z\in \Lambda}|\la f,\tauhz\f\ra|^2\leq b\|f\|_2^2,\quad \forall f \in\lrd.
\end{equation}
In particular, when $\hbar=(2\pi)^{-1}$,  the operator  $\widehat{\mathcal{T}}(z_0):=\widehat{\mathcal{T}}^{(2\pi)^{-1}}(z_0)$ is the so-called time-frequency (or phase-space) shift
$$\widehat{\mathcal{T}}(z_0)f(x)=e^{-\pi i p_0x_0} e^{2\pi i p_0 x} f(x-x_0)=T_{x_0}M_{p_0}f(x), \quad  z_0=(x_0,p_0),$$
where translation and modulation operators are defined by
 $$T_{x_0}f(x) = f(x-x_0) \quad \mbox{and}\quad M_{p_0}f(x) = e^{2\pi i p_0  x} f(x).$$
  In this case we recapture the standard definition of a  Gabor frame (see the next section for more details). Since they first appearance in the fundamental paper by Duffin and Schaffer \cite{DuffinSchaeffer79} on non-uniform sampling of band-limited functions, frames have been applied in many fields of mathematics and physics. In particular, Gabor frames have been widely used in signal analysis,  time-frequency analysis, quantum physics. Recently Gabor frames have been successfully applied in the study of PDE's. In \cite{fio3,cnr2012time} they have shown to provide optimally sparse representations for Schr\"{o}dinger type propagators and in \cite{GaborEvolEq}  reveal to be an equally efficient tool for representing solutions to hyperbolic and parabolic-type differential equations with constant coefficients.  More generally, wave packet analysis and almost diagonalization of pseudodifferential and Fourier integral operators by Gabor frames have been  performed in \cite{cnr2007time,cnr2013wave,CNRExponentiallysparse2013,cnrShubin,cnr2013schr,grochenig2008banach,nic}.\par

Pursuing the work on deformation of Gabor frames, that have been investigated  by many authors \cite{ascensi2014dilation,cnr2012approximation,feichtinger2004varying,grochenig2013deformation,luo2000deforming},  de Gosson in \cite{deGossonACHA} uses the Schr\"odinger evolution to deform Gabor frames.

This paper is intended to be in some sense complementary to the work of de Gosson \cite{deGossonACHA},  because we use the frames to construct an approximate propagator for the Schr\"odinger equation. We also adopt almost  the same notation as in \cite{deGossonACHA}.\par
\subsection{Main Results}
Here we consider $\hbar$-Gabor frames where the window function is the standard Gaussian
 \begin{equation}\label{Gaussian}
 \phi_0(x)=\pi^{-d/4}e^{-|x|^2/2}
 \end{equation}
 and its rescaled version
 \begin{equation}\label{Gaussianh}
 \phi^{\hbar}_0(x)=(\pi \hbar)^{-d/4}e^{-|x|^2/(2\hbar)}.
 \end{equation}
 We define the coherent state centered at $z\in\rdd$ the function
\begin{equation}\label{Gaussianhz}
 \phi^{\hbar}_z(x)=\tauhz \phi^{\hbar}_0(x).
 \end{equation}
 Consider the solution $z_t=(x_t,p_t)$ to the Hamiltonian system
\begin{equation}\label{HS} \dot{x}_t = \partial_p H(t,x_t, p_t),\quad \dot{p}_t = -\partial_x H(t,x_t, p_t)
\end{equation}
 with initial value $z_0=(x_0,p_0)$ and define
 \begin{equation}\label{Hquad}
 H^{(2)}_{z_0}(t,X)=\sum_{|\gamma|=2}\frac1 {\gamma!}\partial^\gamma_X H(t,z_t)X^\gamma,\quad X\in\rdd.
 \end{equation}
 It is well-known that the solution to the corresponding operator Schr\"odinger equation  for $\hbar=1$
 \begin{equation}\label{metap}
 i \partial_t \widehat{S_t}(z_0) =Op^w_1 [H^{(2)}_{z_0}(t)] \widehat{S_t}(z_0)\quad\widehat{S_0}(z_0) =I,
 \end{equation}
 is a metaplectic operator $\widehat{S_t}(z_0)$ corresponding to the symplectic matrix $S_t(z_0)$ via the metaplectic representation \cite{deGossonbook,robert2004propagation}.
Following the works \cite{hagedorn1980semiclassical,hagedorn1981semiclassical,robert2004propagation}, a natural ansatz for asymptotic solutions  to \eqref{C1}, modulo $O(\hbar^{(N+1)/2})$, $N\in\R$, where the initial value is the coherent state $\phi^{\hbar}_{z_0}$, that is
 \begin{equation}\label{C1z0}
\begin{cases} i \hbar
\partial_t u =\Ha u\\
u(0)=\phi^{\hbar}_{z_0},
\end{cases}
\end{equation}
is provided by the Gaussian beam
\begin{equation}\label{solapprox}
 \phi^{\hbar,N}_{z_0}(x)=e^{\frac {i} {\hbar}\delta(t,z_0)}\widehat{\mathcal{T}}^{\hbar}(z_t)\widehat{M}_{\hbar^{-1/2}} \widehat{S_t}(z_0)\sum_{j=0}^N \hbar^{j/2}b_j(t,x)\phi_0(x).
\end{equation}
Here the symmetrized action $\delta$ is defined by
\begin{equation}\label{symmaction}
\delta(t,z_0)=\int_0^t \Big(\frac12 \sigma(z_s,\dot{z}_s)-H(s,z_s)\Big)\,ds,
\end{equation}
with $\sigma$ being the standard symplectic form;  the metaplectic operator $\widehat{M}_{\hbar^{-1/2}}$ is given by $$\widehat{M}_{\hbar^{-1/2}}f(x)=\hbar^{-d/4}f(\hbar^{-1/2} x)$$ and the functions $b_j(t,x)$ are suitable polynomials in $x$ with coefficients depending on $t,z_0$ ($b_0\equiv1$), as we shall see in the sequel.\par
The construction of the parametrix via Gabor frames, having the previous Gaussian beams as building blocks, is performed as follows.
Set
\begin{equation}\label{h}
h= 2\pi \hbar,
\end{equation}
and consider a $\hbar$-Gabor frame $\G^\hbar(\phi_0^\hbar,h^{1/2}\Lambda)$ with $\Lambda=\alpha \bZ^d\times \beta \bZ^d$, $\alpha,\beta>0$. Let $\gamma^h$ be a dual window in $\cS(\rd)$ (see the next section for details). For $N\geq0$, $t\in [0,T]$, the parametrix to \eqref{C1} is defined by
\begin{equation}\label{Gab_exp}
[U^{(N)}(t) f](t,\cdot)=\sum_{z\in h^{1/2}\Lambda}\langle f,\th(z)\gamma^h\rangle \phi_z^{\hbar,N}(t,\cdot).
\end{equation}
Observe that $U^{(N)}(0) f=f$.\par
The following assumptions will be imposed throughout the
paper. \par
\par\medskip
{\bf Assumption (H)}. {\it Suppose that the symbol $H(t,X)$ is continuous with respect to $(t,X)\in [0,T]\times \rdd$ and smooth in $X$, satisfying
\begin{equation}\label{ipotesi}
|\partial^\alpha_X H(t,X)|\leq C_\alpha,\quad \forall |\alpha|\geq 2,\ X\in\rdd,\ t\in [0,T].
\end{equation}
}
This is our main result.
\begin{theorem}\label{mainteo}
Under the {\bf Assumption (H)} and with the above notation, there exists a constant $C=C(T)$ such that, for every $f\in\lrd$,
\begin{equation}\label{teoa0}
\|U^{(N)}(t)f\|_{L^2(\rd)}\leq C\|f\|_{L^2(\rd)}\quad \forall t\in[0,T]
\end{equation}
and
\begin{equation}\label{teob0}
\|(i\hbar \partial_t-\widehat{H(t)})U^{(N)}f\|_{L^2(\rd)}\leq C\hbar^{(N+3)/2}\|f\|_{L^2(\rd)}\quad \forall t\in[0,T].
\end{equation}
If $U(t)$ denotes the exact propagator, for every $f\in\lrd$,
\begin{equation}\label{teoc0}
\|(U^{(N)}(t)-U(t))f\|_{L^2(\rd)}\leq Ct \hbar^{(N+1)/2}\|f\|_{L^2(\rd)} \quad \forall t\in[0,T].
\end{equation}
\end{theorem}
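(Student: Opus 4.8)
The plan is to realize the parametrix as a composition $U^{(N)}(t)=D_t\circ C_h$, where $C_h\colon f\mapsto(\langle f,\th(z)\gamma^h\rangle)_{z\in h^{1/2}\Lambda}$ is the analysis operator of the dual $\hbar$-Gabor frame and $D_t$ is the synthesis operator against the deformed coherent states $\{\phi^{\hbar,N}_z(t,\cdot)\}_{z\in h^{1/2}\Lambda}$, and then to bound each factor on $\lrd$ uniformly in $\hbar\in(0,1]$ and $t\in[0,T]$. The first ingredient is a rescaling: via the unitary dilation $\widehat{M}_{\hbar^{1/2}}$ and the substitution $h=2\pi\hbar$, the $\hbar$-Gabor frame $\G^\hbar(\phi^\hbar_0,h^{1/2}\Lambda)$ is carried to an ordinary Gabor frame generated by the fixed Gaussian $\phi_0$ over a product lattice $\alpha\bZ^d\times\beta\bZ^d$ with $\alpha\beta$ small. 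The latter is a frame with a Schwartz dual window $\gamma$ (whose existence is recalled in the next section), and $\gamma^h$ is its dilate; hence $C_h$ is bounded $\lrd\to\ell^2(h^{1/2}\Lambda)$ with a bound independent of $\hbar$.

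Next I would treat a single Gaussian beam and estimate its residual. Fix $z_0$. Using the exact covariance $\th(z_t)^{-1}\,\Ha\,\th(z_t)=Op^w_{\hbar}[H(t,\cdot+z_t)]$ of Weyl operators under phase--space shifts, one inserts the ansatz \eqref{solapprox} into $i\hbar\partial_t u=\Ha u$, factors out the $t$-dependent unitary $\me^{\frac{i}{\hbar}\delta(t,z_0)}\,\th(z_t)\,\widehat{M}_{\hbar^{-1/2}}\widehat{S_t}(z_0)$, and Taylor expands $H(t,z_t+\hbar^{1/2}Y)$ about $z_t$: the constant and linear terms are absorbed by the choice of $z_t$ through Hamilton's equations \eqref{HS} and of the symmetrized action \eqref{symmaction}, the quadratic term is killed by the defining equation \eqref{metap} for $\widehat{S_t}(z_0)$, and each monomial of degree $k\ge3$ comes with a factor $\hbar^{k/2}$ and bounded coefficient $\partial^\gamma_X H(t,z_t)$. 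Choosing the polynomials $b_j(t,x)$ recursively so as to cancel the resulting expansion in powers of $\hbar^{1/2}$ up to order $N$ — a triangular system of linear transport ODEs driven by polynomials times $\phi_0$, with $b_0\equiv1$ — one arrives at an identity
\[
(i\hbar\partial_t-\Ha)\,\phi^{\hbar,N}_{z_0}(t,\cdot)=\hbar^{(N+3)/2}\,\me^{\frac{i}{\hbar}\delta(t,z_0)}\,\th(z_t)\,\widehat{M}_{\hbar^{-1/2}}\widehat{S_t}(z_0)\,\psi^\hbar_{z_0}(t,\cdot),
\]
with $\psi^\hbar_{z_0}(t,\cdot)$ a finite sum of polynomials times $\phi_0$. \textbf{Assumption (H)} is used precisely to make everything \emph{uniform}: the Hessian of $H$ along the trajectory is uniformly bounded, so $\|\widehat{S_t}(z_0)\|\le\me^{CT}$ by Gronwall and $\widehat{S_t}(z_0)$ sends $\cS$-bounded sets to $\cS$-bounded sets uniformly; the derivatives $\partial^\gamma_X H(t,z_t)$ with $|\gamma|\ge3$ are uniformly bounded as well, so the recursion for the $b_j$ is stable and the reduced profiles $\widehat{S_t}(z_0)\sum_{j}\hbar^{j/2}b_j(t,\cdot)\phi_0$ and the residual profiles $\psi^\hbar_{z_0}(t,\cdot)$ lie in fixed bounded subsets of $\cS(\rd)$, uniformly in $z_0\in h^{1/2}\Lambda$, $t\in[0,T]$, $\hbar\in(0,1]$.

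The third step is the Bessel estimate for the two deformed families. By \textbf{Assumption (H)} the flow $\Phi_t$ of \eqref{HS} has a globally bounded differential on $[0,T]$ (Gronwall applied to the variational equation, whose coefficient matrix is the Hessian of $H$ along the trajectory), hence is bi--Lipschitz with constants depending only on $T$; consequently the rescaled set $\{\hbar^{-1/2}z_t:z_0\in h^{1/2}\Lambda\}$ — the phase--space centers of the rescaled coherent states — is uniformly separated. Combined with the uniform Schwartz bounds from the previous step, a standard almost--orthogonality argument — a Schur test on the Gram matrix, whose off--diagonal entries decay uniformly in $|z_t-z'_t|$ — shows that both $\{\phi^{\hbar,N}_z(t,\cdot)\}_{z\in h^{1/2}\Lambda}$ and $\{\hbar^{-(N+3)/2}(i\hbar\partial_t-\Ha)\phi^{\hbar,N}_z(t,\cdot)\}_{z\in h^{1/2}\Lambda}$ are Bessel sequences in $\lrd$ with bounds independent of $\hbar$ and of $t\in[0,T]$; let $D_t$ and $\widetilde D_t$ denote the associated synthesis operators. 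Then $U^{(N)}(t)=D_t\circ C_h$ gives \eqref{teoa0}, while the displayed identity gives $(i\hbar\partial_t-\Ha)U^{(N)}(t)=\hbar^{(N+3)/2}\,\widetilde D_t\circ C_h$, which gives \eqref{teob0}.

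Finally, \eqref{teoc0} follows by Duhamel's principle. Under \textbf{Assumption (H)} the quantum Hamiltonian $\Ha$ is essentially self--adjoint and the exact propagator for \eqref{C1} is a two--parameter unitary family $\mathcal{U}(t,s)$, with $U(t)=\mathcal{U}(t,0)$. Setting $R^{(N)}(t):=(i\hbar\partial_t-\Ha)U^{(N)}(t)$, so that $\|R^{(N)}(t)f\|_{\lrd}\le C\hbar^{(N+3)/2}\|f\|_{\lrd}$ by \eqref{teob0}, and using $U^{(N)}(0)=\Id$, Duhamel's formula yields $U(t)f-U^{(N)}(t)f=-\tfrac{1}{i\hbar}\int_0^t\mathcal{U}(t,s)R^{(N)}(s)f\,ds$, whence by unitarity $\|(U(t)-U^{(N)}(t))f\|_{\lrd}\le\hbar^{-1}\int_0^t\|R^{(N)}(s)f\|_{\lrd}\,ds\le Ct\,\hbar^{(N+1)/2}\|f\|_{\lrd}$. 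The main obstacle, I expect, is the uniformity needed in the second and third steps: one must control the Gaussian beam profiles and their residuals in Schwartz seminorms \emph{uniformly over the unbounded set of base points $z_0\in h^{1/2}\Lambda$} and over $t\in[0,T]$, and simultaneously keep the transported lattice $\{z_t\}$ uniformly separated — both relying crucially on the at--most--quadratic growth encoded in \eqref{ipotesi}.
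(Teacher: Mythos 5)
Your proposal is correct and follows essentially the same route as the paper: a uniform Bessel/almost-orthogonality (Gram--Schur) bound for the beams and their residuals, relying on uniform Schwartz seminorm control from Assumption (H) and the bi-Lipschitz Hamiltonian flow, combined with the frame-rescaling bound for the analysis operator and Duhamel's formula with unitarity of the exact propagator. The only cosmetic differences (keeping the transported centers separated rather than pulling back via the Lipschitz inverse of $z\mapsto z_t$, and describing the residual profiles as polynomials times $\phi_0$ rather than uniformly bounded Schwartz functions) do not change the argument.
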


The pioneering papers in this spirit, for Hamiltonians $H(t,x,p)=V(x)+p^2/2$, come back to \cite{hagedorn1980semiclassical,hagedorn1981semiclassical}. More general Hamiltonians were considered in \cite{combescure2012quadratic,robert2004propagation}, which inspired this work.\par
In the framework of nonlinear approximation we can also consider nonlinear parametrices, constructed as follows.\par
Let $\eta\geq0$ be a threshold, and for $f\in L^2(\rd)$ consider the index set
\[
A_{\eta,f}=\{z\in h^{1/2}\Lambda: |\langle f,\th(z)\gamma^h\rangle|>\eta\},
\]
and the nonlinear operator
\[
U^{(N)}_\eta(t)[f](t,\cdot)=\sum_{z\in A_{\eta,f}}\langle f,\th(z)\gamma^h\rangle \phi_z^{\hbar,N}(t,\cdot).
\]
In particular, for $\eta=0$ we recover $U^{(N)}(t)$. \par
In this case we attain the following issue.
\begin{theorem}\label{mainteo1}
Under the {\bf Assumption (H)} and with the above notation, there exists a constant $C=C(T)>0$ such that, for every $f\in L^2(\rd)$, $\eta\geq0$,
\begin{equation}\label{teoa1}
\|U^{(N)}_\eta(t)[f]\|_{L^2(\rd)}\leq C\Big(\sum_{z\in A_{\eta,f}}|\langle f,\th(z)\gamma^h\rangle|^2\Big)^{1/2}\quad \forall t\in[0,T]
\end{equation}
and
\begin{equation}\label{teob1}
\|(i\hbar \partial_t-\widehat{H(t)})U^{(N)}_\eta [f]\|_{L^2(\rd)}\leq C\hbar^{(N+3)/2}\Big(\sum_{z\in A_{\eta,f}}|\langle f,\th(z)\gamma^h\rangle|^2\Big)^{1/2}\quad \forall t\in[0,T].
\end{equation}
If $U(t)$ denotes the exact propagator, for every $f\in L^2(\rd)$,
\begin{multline}\label{teoc1}
\|U^{(N)}_\eta(t)[f]-U(t)f\|_{L^2(\rd)}\leq C \Big(\sum_{z\not\in A_{\eta,f}}|\langle f,\th(z)\gamma^h|^2\Big)^{1/2}\\
+ Ct \hbar^{(N+1)/2}\Big(\sum_{z\in A_{\eta,f}}|\langle f,\th(z)\gamma^h\rangle|^2\Big)^{1/2}\ \forall t\in[0,T].
\end{multline}
\end{theorem}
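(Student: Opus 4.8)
The plan is to deduce Theorem~\ref{mainteo1} from Theorem~\ref{mainteo} — more precisely, from the Bessel-type estimates that its proof produces — essentially by bookkeeping. The mechanism is that every operator appearing in Theorem~\ref{mainteo} factors through the \emph{synthesis} operator attached to the Gaussian beams, so that restricting the coefficient index set to $A_{\eta,f}$ amounts only to precomposing with a coordinate projection of $\ell^2(h^{1/2}\Lambda)$, which has operator norm $1$.

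Concretely, I would first set up the factorizations. Put $c=(c_z)_{z\in h^{1/2}\Lambda}$, $c_z=\langle f,\th(z)\gamma^h\rangle$; the Bessel property of $\gamma^h\in\cS(\rd)$ gives $c\in\ell^2$ with $\|c\|_{\ell^2}\le C\|f\|_{L^2}$. Introduce the synthesis operator $D_t\colon(d_z)_z\mapsto\sum_z d_z\,\phi_z^{\hbar,N}(t,\cdot)$ and the ``defect'' operator $R_t\colon(d_z)_z\mapsto\sum_z d_z\,r_z(t,\cdot)$, where $r_z(t,\cdot):=(i\hbar\partial_t-\Ha)\phi_z^{\hbar,N}(t,\cdot)$; writing $\chi_{A_{\eta,f}}$ for the coordinate projection of $\ell^2$ onto $A_{\eta,f}$, linearity gives $U^{(N)}_\eta(t)[f]=D_t(\chi_{A_{\eta,f}}c)$ and $(i\hbar\partial_t-\Ha)U^{(N)}_\eta(t)[f]=R_t(\chi_{A_{\eta,f}}c)$. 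From the proof of Theorem~\ref{mainteo} one has, uniformly in $t\in[0,T]$ and in $\hbar$, the bounds $\|D_t\|_{\ell^2\to L^2}\le C$ and $\|R_t\|_{\ell^2\to L^2}\le C\hbar^{(N+3)/2}$ (these underlie \eqref{teoa0} and \eqref{teob0}, since $U^{(N)}(t)=D_t\circ C_{\gamma^h}$ and $(i\hbar\partial_t-\Ha)U^{(N)}(t)=R_t\circ C_{\gamma^h}$ with the analysis operator $C_{\gamma^h}\colon f\mapsto(c_z)_z$ bounded into $\ell^2$). Since $\|\chi_{A_{\eta,f}}c\|_{\ell^2}^2=\sum_{z\in A_{\eta,f}}|c_z|^2$ and $\|\chi_{A_{\eta,f}}\|\le 1$, estimates \eqref{teoa1} and \eqref{teob1} follow at once. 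For $\eta>0$ the set $A_{\eta,f}$ is finite, so no convergence question arises; the case $\eta=0$ is Theorem~\ref{mainteo} itself, and since all quantities depend $\ell^2$-continuously on $c$ a density argument covers it uniformly if one prefers.

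Next I would derive \eqref{teoc1} by Duhamel's formula. Since $\phi_z^{\hbar,N}(0,\cdot)=\phi_z^{\hbar}=\th(z)\phi_0^{\hbar}$, the initial datum of $t\mapsto U^{(N)}_\eta(t)[f]$ is $w_0:=\sum_{z\in A_{\eta,f}}c_z\,\phi_z^{\hbar}$; as $\G^\hbar(\phi_0^{\hbar},h^{1/2}\Lambda)$ is a frame and $f=\sum_z c_z\,\phi_z^{\hbar}$ by the reconstruction formula, its Bessel bound yields $\|f-w_0\|_{L^2}=\big\|\sum_{z\notin A_{\eta,f}}c_z\phi_z^{\hbar}\big\|_{L^2}\le C\big(\sum_{z\notin A_{\eta,f}}|c_z|^2\big)^{1/2}$. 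The curve $w(t):=U^{(N)}_\eta(t)[f]$ solves $i\hbar\partial_t w=\Ha w+F(t)$ with $F(t)=(i\hbar\partial_t-\Ha)U^{(N)}_\eta(t)[f]$ controlled by \eqref{teob1}; from $w(t)=U(t)w_0+\tfrac1{i\hbar}\int_0^t U(t-s)F(s)\,ds$ and the unitarity of the exact propagator $U(t)$ on $L^2(\rd)$,
\[
\big\|U^{(N)}_\eta(t)[f]-U(t)w_0\big\|_{L^2}\le\frac1\hbar\int_0^t\|F(s)\|_{L^2}\,ds\le C\,t\,\hbar^{(N+1)/2}\Big(\sum_{z\in A_{\eta,f}}|c_z|^2\Big)^{1/2}.
\]
Adding $\|U(t)(w_0-f)\|_{L^2}=\|w_0-f\|_{L^2}\le C\big(\sum_{z\notin A_{\eta,f}}|c_z|^2\big)^{1/2}$ and the triangle inequality gives \eqref{teoc1}.

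The main, and fairly mild, obstacle is of a bookkeeping nature, on two points. First, one must know that $w(t)=U^{(N)}_\eta(t)[f]$ genuinely solves the inhomogeneous equation in a sense strong enough to run Duhamel (termwise time differentiation under the sum, absolute continuity of $t\mapsto w(t)$ into $L^2$); for $\eta>0$ this is trivial (finite sum) and for $\eta=0$ it is part of the proof of Theorem~\ref{mainteo}. Second, one must have the bounds on $D_t$ and $R_t$ on \emph{all} of $\ell^2$, not merely on the range of $C_{\gamma^h}$, since $\chi_{A_{\eta,f}}c$ generally lies outside that range; this is again supplied by the proof of Theorem~\ref{mainteo}, where $\phi_z^{\hbar,N}(t,\cdot)$ and $r_z(t,\cdot)$ are recognized as coherent-state-type functions localized near the points $z_t$ of the flowed lattice $\{z_t:z\in h^{1/2}\Lambda\}$, which is uniformly separated, so that the associated synthesis maps are genuinely Bessel on $\ell^2$. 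In short, the substance lies entirely in Theorem~\ref{mainteo}, and Theorem~\ref{mainteo1} is a corollary-level refinement obtained by tracking which Gabor coefficients are retained.
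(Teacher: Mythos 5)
Your proposal is correct and follows essentially the same route as the paper: the bounds \eqref{teoa1}--\eqref{teob1} are exactly the paper's Theorem \ref{mainteo2} (the synthesis/defect estimates valid for arbitrary $\ell^2$ coefficient sequences, hence for the truncated sequence), and \eqref{teoc1} is obtained by Duhamel's formula together with the initial discrepancy $U^{(N)}_\eta(0)[f]-f=\sum_{z\notin A_{\eta,f}}\langle f,\th(z)\gamma^h\rangle\phi^\hbar_z$ controlled by the Bessel bound, just as in the paper. Your explicit remark that the estimates must hold on all of $\ell^2$ rather than only on the range of the analysis map is precisely the point the paper handles by stating Theorem \ref{mainteo2} for arbitrary sequences.
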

A similar nonlinear parametrix was constructed in \cite{Lex2} in the case $N=0$. In particular, the estimate \eqref{teob1} already appeared there (Theorem 5.1), but with an additional factor $\sqrt{|A_{\eta,f}|}$ in the righ-hand side:  that estimate blows up when $\eta\to 0$  if $f$ has an infinite number of non zero Gabor coefficients (i.e. $|A_{0,f}|=+\infty$), whereas we see that this is not the case in \eqref{teob1}. In addition, the parametrix in \cite{Lex2} was constructed by means of a truncated Gaussian, which introduces a further error in the estimate.\par
As a byproduct of these techniques, in Section 4 we will also extend the weak deformation of frames result in \cite[Proposition 18]{deGossonACHA} to the case of higher order deformations. However, since the approach is perturbative in nature, our result just holds for $t \hbar^{1/2}$ small enough, and no longer for every $t$ as in \cite{deGossonACHA}.\par
We end up by recalling that Gaussian beam methods have been employed to obtain asymptotic solutions to hyperbolic PDE's in  \cite{Ralston} and hyperbolic systems in \cite{MR3161186,Lex3,lugara2003frame,Lex1,MR2558781,Walden}.

\subsection{Numerical Results} Finally, in Section \ref{sec5} we provide some numerical experiments. We study the Cauchy problem \eqref{C1} for an  Hamiltonian function $H(t,x,p)$ of the form
\begin{equation}\label{gen_Harm_osc}
H(t,x,p) = V(x) + \frac{p^2}{2},
\end{equation}
with an oscillating potential. This is a standard setting for the so-called generalized harmonic oscillator and it is perfectly suited to discuss the behavior of the method in the presence of a potential hill and a potential well.\par
We develop numerical algorithms using MATLAB and the powerful LTFAT\footnote{Large Time Frequency Analysis Toolbox http://ltfat.sourceforge.net/} package, see \cite{sondergaard2007finite,MR2957897}.
We  address the long time propagation of the beams using the reinitialization procedure described in \cite{Lex2}.\par

\section{Preliminaries and  \tfa \,tools}\label{Sect:Prel}
  We refer to  \cite{Gro} for an introduction to time-frequency concepts and in particular to \cite{deGossonbook} for applications to Mathematical Physics. For sake of brevity, sometimes we write
$xy=x\cdot y$,  the scalar product on $\Ren$ and  $|t|^2=t\cdot t$, for $t\in\Ren$.
 The brackets  $\la f,g\ra$
 denote the inner
 product  of $L^2(\Ren)$, i.e. $\la f,g\ra=\int f(t){\overline
 {g(t)}}dt$ and $\|f\|_2^2=\la f,f\ra$.  The Schwartz class is denoted by $\cS(\rd)$.
 For $1\leq p<\infty$, the space $\ell^{p}
(\Lambda )$
 is  the
space of sequences $a=\{{a}_{\lambda}\}_{\lambda \in \Lambda }$
on a  lattice $\Lambda$, such that
$$\|a\|_{\ell^{p}}:=\left(\sum_{\lambda\in\Lambda}
|a_{\lambda}|^p\right)^{1/p}<\infty.
$$
We write ${\rm Sp}(d,\bR)$ for the group of symplectic matrices on $\rdd $, i.e., $\A\in {\rm Sp}(d,\bR)$ if $\A$ is a  $2d\times 2d$ invertible matrix such that $\A ^T J \A  = J$,
where
\begin{equation}
\label{matriceJ}
J=\begin{pmatrix} 0_d&I_d\\-I_d&0_d\end{pmatrix}.
\end{equation}
The standard symplectic form on $\rdd $ is
\begin{equation}
\sigma(z,z')=\;(z')^T J z, \qquad z,z'\in\R^{2d}.
\label{symp}\end{equation}
The metaplectic group is denoted by $Mp(d)$. Consider $\widehat{S}\in Mp(d)$ with covering projection $\pi^\hbar: \widehat{S}\mapsto S \in {\rm Sp}(d,\R)$.  The appearance of the subscript $\hbar$ is due to the fact that to the $\hbar$-dependent operator $\widehat{V}_Pf(x)= e^{-i Px \cdot x/(2\hbar)}f(x)$ (chirp) corresponds the projection $\pi^\hbar (\widehat{V}_P)=V_P$, with $V_P=\begin{pmatrix} I_d&0_d\\-P&0_d\end{pmatrix}$, $P=P^T$,
  and to the Fourier transform $\widehat {J} f(x)=(2\pi i \hbar)^{-d/2}\intrd e^{-i x x'/\hbar} f(x') \,dx'$  corresponds    $\pi^\hbar(\widehat{J})=J$, defined in \eqref{matriceJ}. For details see \cite[Appendix A]{deGossonACHA} and the books \cite{deGossonbook, deGossonbookG}.
 In particular, for $\lambda>0$ we shall use the metaplectic operator $\widehat{M}_{\lambda}\in Mp(d)$  defined (up to a sign) by
\begin{equation}\label{oplambda}\widehat{M}_{\lambda} f(x)=\lambda^{d/2} f(\lambda x),\,\quad f\in\lrd
\end{equation}
 and whose projection is $\pi^\hbar (\widehat{M}_{\lambda})=M_{\lambda}$, the symplectic  matrix
\begin{equation}
\label{matricelambda}
M_{\lambda}=\begin{pmatrix} \lambda^{-1} I_d& 0_d\\0_d&\lambda I_d.\end{pmatrix}
\end{equation}
In the sequel we shall often use the fundamental symplectic covariance formula
\begin{equation}\label{CF}
\tauhz \widehat{S}= \widehat{S}\th(S^{-1}z)\quad S\in {\rm Sp}(d,\R).
\end{equation}

\subsection{$\hbar$-Gabor Frames}
 The definition of a $\hbar$-Gabor frame is already contained in the introduction. Consider a lattice $\Lambda$ in $\rdd$. The Gabor system $$\mathcal{G}(\f,\Lambda) =\{\widehat{\mathcal{T}}(z)\f,z\in\Lambda\},$$
(recall that $\t(z)=\t^{(2\pi)^{-1}}(z)$) is a Gabor frame for $\lrd$ if there exist
constants $a,b> 0 $ such that for every $f\in\lrd$
\begin{equation}
\label{Frame_rel}
a\|f\|^2_2 \leq \sum_{z\in\Lambda} |\la f,\widehat{\mathcal{T}}(z)\f|^2 \leq b\|f\|^2_2.
\end{equation}
If $\eqref{Frame_rel}$ holds,
then there exists a  $\gamma\in\lrd$ (so-called dual window),
such that $\mathcal{G}(\gamma,\Lambda)$
is a frame for $\lrd$ and every $f\in\lrd$ can be expanded as
\begin{equation}\label{GabExp}
f=\sum_{z\in\Lambda}\la f, \widehat{\mathcal{T}}(z)\f \ra \widehat{\mathcal{T}}(z)\gamma= \sum_{z\in\Lambda}\la f,\widehat{\mathcal{T}}(z)\gamma\ra \widehat{\mathcal{T}}(z)\f,
\end{equation}
with unconditional convergence in $\lrd$. In particular,  if the window  $\f$  is a Gaussian function, then there exists a dual window $\gamma$ that is smooth and well-localized, in particular  $\gamma\in\cS(\rd)$ (see \cite{GaborEvolEq,grochenig2009gabor}).\par
In what follows we investigate some useful properties which let us switch from a $\hbar$-Gabor frame to a standard Gabor frame and vice-versa. To reach this goal, we define the dilation matrix $D^{\hbar}$ and its inverse $(D^{\hbar})^{-1}$
\begin{equation}\label{Dh}
D^{\hbar}=\begin{pmatrix} I_d&0_d\\0_d& h I_d.\end{pmatrix}\quad (D^{\hbar})^{-1}=\begin{pmatrix} I_d&0_d\\0_d&h^{-1} I_d\end{pmatrix}
\end{equation}
(recall that $h=2\pi \hbar$).
\begin{proposition}\label{P1}
Let $D^{\hbar}$ be the matrix defined in \eqref{Dh}. The system $\Ghframe$ is a $\hbar$-Gabor frame if and only if $\mathcal{G}(\f,(D^{\hbar})^{-1}\Lambda)$ is a Gabor frame and the frame bounds are the same. Moreover, every dual window $\gamma$ of the Gabor frame $\mathcal{G}(\f,(D^{\hbar})^{-1}\Lambda)$ originates a $\hbar$-Gabor frame $\mathcal{G}^\hbar(\gamma,\Lambda)$, dual frame of $\Ghframe$.
\end{proposition}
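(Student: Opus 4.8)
The plan is to reduce the $\hbar$-Gabor frame condition to the ordinary Gabor frame condition by an explicit change of variables in the defining inequality \eqref{framedef}. The key observation is that the operators $\th(z)$ and the ordinary time-frequency shifts $\widehat{\mathcal T}(z)$ differ precisely by a rescaling of the frequency variable, which is exactly the content of the matrix $D^\hbar$ in \eqref{Dh}. Concretely, I would first write out $\th(z)\f(x) = e^{i\hbar^{-1}(px - xp/2)}\f(x-x)$ for $z=(x,p)$ and compare with $\widehat{\mathcal T}(w)\f(x) = e^{-\pi i q x_1}e^{2\pi i q x}\f(x-x_1)$ for $w=(x_1,q)$; matching the translation part gives $x_1 = x$, and matching the modulation part $e^{2\pi i q x} = e^{i\hbar^{-1}px}$ forces $q = p/(2\pi\hbar) = p/h$, together with agreement of the phase constants. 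Thus $\th(x,p) = \widehat{\mathcal T}(x, p/h)$, i.e.\ $\th(z) = \widehat{\mathcal T}((D^\hbar)^{-1}z)$ for all $z\in\rdd$.

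With this identity in hand, the frame inequality becomes a matter of relabeling the lattice. For $z\in\Lambda$ we have $|\la f,\th(z)\f\ra|^2 = |\la f,\widehat{\mathcal T}((D^\hbar)^{-1}z)\f\ra|^2$, and as $z$ ranges over $\Lambda$, the point $w=(D^\hbar)^{-1}z$ ranges over $(D^\hbar)^{-1}\Lambda$ (which is again a lattice, since $(D^\hbar)^{-1}$ is an invertible linear map). Hence
\[
\sum_{z\in\Lambda}|\la f,\th(z)\f\ra|^2 = \sum_{w\in(D^\hbar)^{-1}\Lambda}|\la f,\widehat{\mathcal T}(w)\f\ra|^2,
\]
so the two-sided bound \eqref{framedef} for $\mathcal G^\hbar(\f,\Lambda)$ holds with constants $a,b$ if and only if \eqref{Frame_rel} holds for $\mathcal G(\f,(D^\hbar)^{-1}\Lambda)$ with the very same constants $a,b$. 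This proves the first assertion, including the claim about equality of frame bounds.

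For the statement about dual windows, I would invoke the standard Gabor duality already recalled around \eqref{GabExp}: if $\gamma$ is a dual window for $\mathcal G(\f,(D^\hbar)^{-1}\Lambda)$, then every $f\in\lrd$ satisfies $f=\sum_{w\in(D^\hbar)^{-1}\Lambda}\la f,\widehat{\mathcal T}(w)\gamma\ra\widehat{\mathcal T}(w)\f$, with unconditional convergence. Substituting back $w=(D^\hbar)^{-1}z$ and using $\widehat{\mathcal T}((D^\hbar)^{-1}z)=\th(z)$ on both the analysis and synthesis sides turns this into $f=\sum_{z\in\Lambda}\la f,\th(z)\gamma\ra\,\th(z)\f$, which is exactly the reconstruction identity expressing that $\mathcal G^\hbar(\gamma,\Lambda)$ is a dual frame of $\mathcal G^\hbar(\f,\Lambda)$; the first part of the proposition (applied to $\gamma$) ensures $\mathcal G^\hbar(\gamma,\Lambda)$ is itself a $\hbar$-Gabor frame. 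I do not expect any serious obstacle here: the only point requiring minor care is the bookkeeping of the quadratic phase constants in the definitions \eqref{hT} of $\th$ and of $\widehat{\mathcal T}$, to confirm that the intertwining identity $\th(z)=\widehat{\mathcal T}((D^\hbar)^{-1}z)$ holds on the nose (with no residual unimodular factor depending on $z$), since even a $z$-dependent phase would not affect the modulus appearing in the frame inequality but would matter for the clean form of the expansion. Everything else is a direct change of variables.
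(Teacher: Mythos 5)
Your proposal is correct and follows essentially the same route as the paper: both rest on the identity $\widehat{\mathcal{T}}^\hbar(x,2\pi\hbar p)=\widehat{\mathcal{T}}(x,p)$ (equivalently $\widehat{\mathcal{T}}^\hbar(z)=\widehat{\mathcal{T}}((D^{\hbar})^{-1}z)$), re-index the lattice sum in \eqref{framedef} to get the frame equivalence with identical bounds, and then substitute the same identity into the expansion \eqref{GabExp} to transfer the dual window. Your extra check that the quadratic phase constants match exactly (so no residual $z$-dependent phase appears) is a point the paper leaves implicit, but the argument is the same.
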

\begin{proof}
The first part is straightforward and follows the pattern of \cite[Prop. 7]{deGossonACHA}. Precisely, the system $\Ghframe$  is a $\hbar$-Gabor frame if and only if there exist positive constants $a,b>0$ such that  \eqref{framedef} holds. Setting $z=(x,2\pi \hbar p)\in\Lambda$ if and only if $(x,p)\in (D^\hbar)^{-1}\Lambda$, and using $\widehat{\mathcal{T}}^\hbar(x,2\pi \hbar p)=\widehat{\mathcal{T}}(x,p)$, the inequalities \eqref{framedef} are equivalent to
\begin{equation*}
a\|f\|_2^2\leq\sum_{z\in(D^\hbar)^{-1}\Lambda}|\la f,\widehat{\mathcal{T}}\f\ra|^2\leq b\|f\|_2^2,\quad \forall f \in\lrd,
\end{equation*}
as desired. \par
Now, consider a dual window $\gamma\in\lrd$ of the Gabor frame $\mathcal{G}(\f,(D^{\hbar})^{-1}\Lambda)$.  Then every $f\in\lrd$ can be expanded as
\begin{align*}
f&=\sum_{z\in(D^{\hbar})^{-1}\Lambda}\la f, \widehat{\mathcal{T}}(z)\f \ra \widehat{\mathcal{T}}(z)\gamma \\
&=\sum_{(x,p)\in(D^{\hbar})^{-1}\Lambda}\la f, \widehat{\mathcal{T}}^\hbar(x,2\pi \hbar p)\f \ra \widehat{\mathcal{T}}^\hbar(x,2\pi \hbar p)\gamma \\
&=\sum_{(x,p')\in\Lambda}\la f, \widehat{\mathcal{T}}^\hbar(x, p')\f \ra \widehat{\mathcal{T}}^\hbar(x, p')\gamma,
\end{align*}
that is the system $\{\widehat{\mathcal{T}}^\hbar(z),\,z\in\Lambda\}$ is a dual frame of the $\hbar$-Gabor frame $\Ghframe$.

\end{proof}

Given the Gaussian window $\phi_0$ defined in \eqref{Gaussian} and the lattice $\Lambda=\a\zd\times\beta\zd$, with $\a,\beta>0$, de Gosson in \cite[Prop. 12]{deGossonACHA} shows that the system $\mathcal{G}^\hbar(\phi_0^\hbar,h^{1/2}\,\Lambda)$ is a $\hbar$-Gabor frame if and only if $\alpha\beta<1$, this means by the previous proposition  that the system \begin{equation}\label{GFh}\mathcal{G}(\phi_0^\hbar,\a h^{1/2}\,\zd\times \beta h^{-1/2}\zd)\end{equation} is a Gabor frame if and only if $\alpha\beta<1$.  This frame will be used for the numerical experiments in Section \ref{sec5}.\par
Using the same arguments as in the proof of Proposition \ref{P1} we obtain the following characterization:
\begin{proposition}\label{P2}
Let $D^{\hbar}$ be the matrix defined in \eqref{Dh} and consider a Gabor frame $\mathcal{G}(\f,\Lambda)$.  The system $\mathcal{G}(\gamma,\Lambda)$ is a dual Gabor frame  of $\mathcal{G}(\f,\Lambda)$ if and only if  $\mathcal{G}^\hbar(\gamma,D^{\hbar}\Lambda)$    is a dual   $\hbar$-Gabor frame of the $\hbar$-Gabor frame $\mathcal{G}^\hbar(\f,D^{\hbar}\Lambda)$. Moreover, the frame bounds of $\mathcal{G}(\f,\Lambda)$ and $\mathcal{G}^\hbar(\f,D^{\hbar}\Lambda)$ are the same.
\end{proposition}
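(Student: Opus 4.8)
The plan is to follow verbatim the pattern of the proof of Proposition \ref{P1}, the only change being that we now pass from the lattice $\Lambda$ to $D^\hbar\Lambda$ instead of to $(D^\hbar)^{-1}\Lambda$. The crucial observation is the operator identity
\[
\widehat{\mathcal{T}}^\hbar(D^\hbar z)=\widehat{\mathcal{T}}(z),\qquad z=(x,p)\in\rdd ,
\]
which follows at once from \eqref{hT}: since $D^\hbar z=(x,2\pi\hbar p)$, replacing $(x_0,p_0)$ by $(x,2\pi\hbar p)$ in \eqref{hT} the factor $\hbar^{-1}$ cancels and one is left precisely with $\widehat{\mathcal{T}}(x,p)=T_xM_p$. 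As $D^\hbar$ is invertible, $z\mapsto D^\hbar z$ is a bijection of $\Lambda$ onto the lattice $D^\hbar\Lambda$, so this identity lets us transport any statement about the system $\{\widehat{\mathcal{T}}(z)\psi:z\in\Lambda\}$ into the corresponding statement about $\{\widehat{\mathcal{T}}^\hbar(w)\psi:w\in D^\hbar\Lambda\}$, and back.

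First I would record the equality of frame bounds. For any $\psi,f\in\lrd$ the change of index $w=D^\hbar z$ gives
\[
\sum_{w\in D^\hbar\Lambda}|\la f,\widehat{\mathcal{T}}^\hbar(w)\psi\ra|^2=\sum_{z\in\Lambda}|\la f,\widehat{\mathcal{T}}(z)\psi\ra|^2 ,
\]
so the frame inequalities \eqref{Frame_rel} for $\mathcal{G}(\f,\Lambda)$ and \eqref{framedef} for $\mathcal{G}^\hbar(\f,D^\hbar\Lambda)$ coincide term by term. In particular $\mathcal{G}^\hbar(\f,D^\hbar\Lambda)$ is a $\hbar$-Gabor frame with exactly the same bounds as $\mathcal{G}(\f,\Lambda)$ — which is also what Proposition \ref{P1} yields when applied to the lattice $D^\hbar\Lambda$ in place of $\Lambda$. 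This proves the last assertion and makes ``dual $\hbar$-Gabor frame'' meaningful.

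Next I would treat the duality equivalence. If $\mathcal{G}(\gamma,\Lambda)$ is a dual Gabor frame of $\mathcal{G}(\f,\Lambda)$, then \eqref{GabExp} holds on $\Lambda$ for every $f\in\lrd$, with unconditional convergence. Applying $z\mapsto w=D^\hbar z$ together with $\widehat{\mathcal{T}}^\hbar(w)=\widehat{\mathcal{T}}(z)$ to both expansions in \eqref{GabExp} yields
\[
f=\sum_{w\in D^\hbar\Lambda}\la f,\widehat{\mathcal{T}}^\hbar(w)\f\ra\,\widehat{\mathcal{T}}^\hbar(w)\gamma=\sum_{w\in D^\hbar\Lambda}\la f,\widehat{\mathcal{T}}^\hbar(w)\gamma\ra\,\widehat{\mathcal{T}}^\hbar(w)\f ,
\]
with unconditional convergence (a mere relabelling of the index set does not affect it), i.e.\ $\mathcal{G}^\hbar(\gamma,D^\hbar\Lambda)$ is a dual $\hbar$-Gabor frame of $\mathcal{G}^\hbar(\f,D^\hbar\Lambda)$. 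The converse follows by reading the same chain of equalities from right to left, using $\widehat{\mathcal{T}}(z)=\widehat{\mathcal{T}}^\hbar(D^\hbar z)$ and the inverse bijection $w\mapsto(D^\hbar)^{-1}w$.

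There is no real obstacle here: the entire content is the elementary identity $\widehat{\mathcal{T}}^\hbar(D^\hbar z)=\widehat{\mathcal{T}}(z)$ together with a relabelling of a lattice sum, exactly as in Proposition \ref{P1}. The one point that deserves a word is that ``dual frame'' must be understood in the symmetric sense of \eqref{GabExp} — both reconstruction formulas hold simultaneously once $\f$ is a Gabor frame window and $\gamma$ a dual window — so that the statement really is an equivalence and not merely an implication; this is guaranteed by the standard frame theory recalled around \eqref{GabExp}. It may also be worth remarking that, since $\phi_0$ admits a dual window $\gamma\in\cS(\rd)$ by \cite{GaborEvolEq,grochenig2009gabor}, the resulting dual $\hbar$-Gabor frame window is Schwartz as well, which is what is needed for the parametrix \eqref{Gab_exp}.
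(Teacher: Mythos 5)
Your argument is correct and is exactly the route the paper takes: the paper proves Proposition \ref{P2} by invoking ``the same arguments as in the proof of Proposition \ref{P1}'', namely the identity $\widehat{\mathcal{T}}^\hbar(x,2\pi\hbar p)=\widehat{\mathcal{T}}(x,p)$ (i.e.\ $\widehat{\mathcal{T}}^\hbar(D^\hbar z)=\widehat{\mathcal{T}}(z)$) together with the relabelling $z\mapsto D^\hbar z$ of the lattice sums, applied both to the frame inequalities and to the reconstruction formula \eqref{GabExp}. Nothing is missing; your remarks on unconditional convergence and on the symmetric meaning of duality are consistent with the paper's usage.
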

We shall work with $\hbar$-Gabor frames where both  windows and lattices are  rescaled. Their dual frames behave as follows. We set
\begin{equation}\label{fh}
\f^h(x)=h^{-d/4}\f(h^{-1/2}x)=\widehat{M}_{h^{-1/2}}\f(x)=\widehat{M}_{(2\pi \hbar)^{-1/2}}\f(x).
\end{equation}
\begin{proposition}\label{P3}
Consider a Gabor frame $\mathcal{G}(\f,\Lambda)$.  The system $\mathcal{G}(\gamma,\Lambda)$ is a dual Gabor frame  of $\mathcal{G}(\f,\Lambda)$ if and only if  $\mathcal{G}^\hbar(\gamma^h,h^{1/2}\,\Lambda)$    is a dual   $\hbar$-Gabor frame of the $\hbar$-Gabor frame $\mathcal{G}^\hbar(\f^h,h^{1/2}\,\Lambda)$. Moreover, the frame bounds of $\mathcal{G}(\f,\Lambda)$ and $\mathcal{G}^\hbar(\f^h,h^{1/2}\,\Lambda)$ are the same.
\end{proposition}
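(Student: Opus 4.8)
The plan is to transfer everything to the ordinary Gabor frame $\mathcal{G}(\f,\Lambda)$ by conjugating with the metaplectic dilation $\widehat{M}_{h^{-1/2}}$ of \eqref{oplambda}, whose symplectic projection is the matrix $M_{h^{-1/2}}$ in \eqref{matricelambda} and which is unitary on $\lrd$ with $(\widehat{M}_{h^{-1/2}})^{-1}=(\widehat{M}_{h^{-1/2}})^{\ast}=\widehat{M}_{h^{1/2}}$. Two elementary identities do all the work: the symplectic covariance formula \eqref{CF} specialized to $\widehat{S}=\widehat{M}_{h^{-1/2}}$, namely
\[
\widehat{\mathcal{T}}^\hbar(z)\widehat{M}_{h^{-1/2}}=\widehat{M}_{h^{-1/2}}\widehat{\mathcal{T}}^\hbar(M_{h^{1/2}}z),
\]
together with the relation $\widehat{\mathcal{T}}^\hbar(x,hp)=\widehat{\mathcal{T}}(x,p)$, which is immediate from \eqref{hT} since $\hbar^{-1}h=2\pi$ and which was already used in the proof of Proposition \ref{P1}.

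First I would fix $z\in h^{1/2}\Lambda$, write $z=h^{1/2}w$ with $w=(w_1,w_2)\in\Lambda$, and record the matrix identity $h^{1/2}M_{h^{1/2}}=D^{\hbar}$ (see \eqref{matricelambda}, \eqref{Dh}), so that $M_{h^{1/2}}z=D^{\hbar}w=(w_1,hw_2)$. Combining the two identities above then yields the key relation
\[
\widehat{\mathcal{T}}^\hbar(z)\f^h=\widehat{\mathcal{T}}^\hbar(h^{1/2}w)\widehat{M}_{h^{-1/2}}\f=\widehat{M}_{h^{-1/2}}\widehat{\mathcal{T}}^\hbar(D^{\hbar}w)\f=\widehat{M}_{h^{-1/2}}\widehat{\mathcal{T}}(w)\f,
\]
valid for $z=h^{1/2}w$, $w\in\Lambda$, and the identical relation with $(\gamma^h,\gamma)$ in place of $(\f^h,\f)$. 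Taking inner products and using unitarity of $\widehat{M}_{h^{-1/2}}$, for every $f\in\lrd$ one gets $\la f,\widehat{\mathcal{T}}^\hbar(z)\f^h\ra=\la\widehat{M}_{h^{1/2}}f,\widehat{\mathcal{T}}(w)\f\ra$, and likewise with $\gamma$.

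From here the frame statement with equal bounds is immediate: summing $|\cdot|^2$ over $z\in h^{1/2}\Lambda$, i.e.\ over $w\in\Lambda$, turns the $\hbar$-Gabor sum for $\mathcal{G}^\hbar(\f^h,h^{1/2}\Lambda)$ evaluated at $f$ into the ordinary Gabor sum for $\mathcal{G}(\f,\Lambda)$ evaluated at $\widehat{M}_{h^{1/2}}f$. Since $\|\widehat{M}_{h^{1/2}}f\|_2=\|f\|_2$ and $f\mapsto\widehat{M}_{h^{1/2}}f$ is a bijection of $\lrd$, the frame inequality \eqref{framedef} for $\mathcal{G}^\hbar(\f^h,h^{1/2}\Lambda)$ is equivalent to \eqref{Frame_rel} for $\mathcal{G}(\f,\Lambda)$, with the same constants $a,b$. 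For the duality, plug the two relations into the $\hbar$-Gabor expansion and pull $\widehat{M}_{h^{-1/2}}$ out of the sum:
\[
\sum_{z\in h^{1/2}\Lambda}\la f,\widehat{\mathcal{T}}^\hbar(z)\f^h\ra\,\widehat{\mathcal{T}}^\hbar(z)\gamma^h=\widehat{M}_{h^{-1/2}}\Big(\sum_{w\in\Lambda}\la\widehat{M}_{h^{1/2}}f,\widehat{\mathcal{T}}(w)\f\ra\,\widehat{\mathcal{T}}(w)\gamma\Big).
\]
By \eqref{GabExp}, $\mathcal{G}(\gamma,\Lambda)$ is a dual window of $\mathcal{G}(\f,\Lambda)$ precisely when the inner sum equals $\widehat{M}_{h^{1/2}}f$ for every $f$; applying $\widehat{M}_{h^{-1/2}}$ returns $f$, and conversely because $f\mapsto\widehat{M}_{h^{1/2}}f$ is onto, so the reconstruction identity on one side is equivalent to the one on the other. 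The symmetric identity in \eqref{GabExp} (roles of $\f$ and $\gamma$ swapped) is handled the same way, giving the equivalence in both directions. The argument is entirely routine; the only point requiring a moment's care is checking that $M_{h^{1/2}}$ carries $h^{1/2}\Lambda$ exactly onto $D^{\hbar}\Lambda$ — i.e.\ the matrix identity $h^{1/2}M_{h^{1/2}}=D^{\hbar}$ — so that $\widehat{\mathcal{T}}^\hbar(x,hp)=\widehat{\mathcal{T}}(x,p)$ may be invoked at every relevant lattice point, everything else being unitarity of $\widehat{M}_\lambda$ and the covariance \eqref{CF}.
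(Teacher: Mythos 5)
Your proof is correct and follows essentially the same route as the paper's: conjugation by the unitary metaplectic dilation $\widehat{M}_{h^{-1/2}}$, the covariance formula \eqref{CF}, and the identification $\widehat{\mathcal{T}}^\hbar(x,hp)=\widehat{\mathcal{T}}(x,p)$ together with $M_{h^{1/2}}(h^{1/2}\Lambda)=D^{\hbar}\Lambda$. The only cosmetic difference is that the paper invokes Proposition \ref{P2} and rewrites the expansion over $D^{\hbar}\Lambda$, whereas you inline that step as a pointwise identity $\widehat{\mathcal{T}}^\hbar(z)\f^h=\widehat{M}_{h^{-1/2}}\widehat{\mathcal{T}}(w)\f$ before summing, which amounts to the same computation.
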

\begin{proof} Consider the metaplectic operator $\widehat{M}_{h^{-1/2}}\in Mp(d)$  and its inverse $\widehat{M}_{h^{1/2}}\in Mp(d)$.
For every $f\in\lrd$, we have $\widehat{M}_{h^{-1/2}}\widehat{M}_{h^{1/2}} f=f$. Given the Gabor frame $\mathcal{G}(\f,\Lambda)$ with dual Gabor frame $\mathcal{G}(\gamma,\Lambda)$, by Proposition \ref{P2} and using the boundedness of the metaplectic operators on $\lrd$ we can write, for every $f\in\lrd$,
\begin{align*}
f&=\widehat{M}_{h^{-1/2}}\sum_{z\in D^{\hbar}\Lambda}\la  \widehat{M}_{h^{1/2}} f, \widehat{\mathcal{T}}^\hbar(z)\f \ra \widehat{\mathcal{T}}^\hbar(z)\gamma \\
&=\widehat{M}_{h^{-1/2}}\sum_{z\in h^{1/2}\,\Lambda}\la  \widehat{M}_{h^{1/2}} f, \widehat{\mathcal{T}}^\hbar( M_{h^{1/2}}\,z)\f \ra \widehat{\mathcal{T}}^\hbar (M_{h^{1/2}}\,z)\gamma \\
&=\sum_{z\in h^{1/2} \,\Lambda}\la f, \widehat{M}_{h^{-1/2}}    \widehat{\mathcal{T}}^\hbar( M_{h^{1/2}}\,z)\f \ra \widehat{M}_{h^{-1/2}}    \widehat{\mathcal{T}}^\hbar( M_{h^{1/2}}\,z)\gamma,\\
&=\sum_{z\in h^{1/2} \,\Lambda}\la f, \widehat{\mathcal{T}}^\hbar(z)\f^h \ra \widehat{\mathcal{T}}^\hbar(z)\gamma^h,
\end{align*}
where we used  $\tauhz \widehat{M}_{h^{-1/2}} = \widehat{M}_{h^{-1/2}} \widehat{\mathcal{T}}^\hbar ( M_{h^{1/2}}\, z)$, by the covariance formula \eqref{CF} and the definition \eqref{fh}. This ends the proof of the equivalence of the dual frames. The proof of the frame bounds follows the argument of the first part of the proof of Proposition \ref{P1}.
\end{proof}

\section{Bounds for the parametrix in $L^2$}
\subsection{Preliminary remarks}
We assume for the Hamiltonian $H(t,X)$ the  validity of   {\bf Assumption (H)}.
In particular, we  consider its second order Taylor term $H^{(2)}_{z_0}$ at $z_0=(x_0,p_0)$, as in \eqref{Hquad} and the corresponding propagator $\st(z_0)$ in \eqref{metap}. We can also consider the operator $\sht(z_0)$ defined by
\begin{equation}\label{metaph}
 i \hbar \partial_t \sht(z_0) =\widehat{H^{(2)}_{z_0}(t)} \sht(z_0)\quad\widehat{S_0}^\hbar(z_0) =I.
 \end{equation}
This operator is related to $\st$ via the formula (omitting the dependence on $z_0$ for simplicity)
 \[
\sht=\mhm\st\mhp.
\]
 Indeed we have
\begin{align*}
i\hbar \sht&=i\hbar \partial_t \mhm\st\mhp\\
&=\hbar \mhm {\rm Op}^w_1[H^{(2)}(t,x,p)] \st \mhp\\
&=\hbar {\rm Op}^w_1[H^{(2)}(t,\hbar^{-1/2}x,\hbar^{1/2}p)]\mhm\st\mhp\\
&={\rm Op}^w_1[H^{(2)}(t,x,\hbar p)]\sht\\
&=\widehat{H^{(2)}(t)}\sht.
\end{align*}
\begin{remark} The action of $\st(z_0)$ or $\sht(z_0)$ on a mudulated Gaussian function can be written down explicitly, for details we refer to Section \ref{sec5} and the references quoted there.
\end{remark}
\begin{remark}\label{remark3.2}
The projection $S_t(z_0)$ represents the flow of the linear system with Hamiltonian $H^{(2)}_{z_0}(t,X)$. As a matrix, $S_t(z_0)$ is in ${\rm Sp}(d,\R)$, for every $t\in [0,T]$. The entries of the matrix $S_t(z_0)$  depend on $t\in[0,T]$ and $z_0\in\rdd$ but they are bounded, because this is true for the coefficients of the polynomial $H^{(2)}_{z_0}(t,X)$ by the assumption \eqref{ipotesi}. The same is true for the entries of the inverse matrix $S_t^{-1}(z_0)$.
\end{remark}

\subsection{Evolution of a coherent state}
Consider the Cauchy problem \eqref{C1z0}. Let $z_t$ be the trajectory of the corresponding Hamiltonian system, with initial condition $z_0$. We will show that an approximate solution (Gaussian beam) is given by
\begin{align}\label{TE}
\phi^{\hbar,0}_{z_0}(t,\cdot)&=e^{\frac{i}{\hbar}\delta(t,z_0)} \th(z_t) \sht(z_0)\phi^\hbar_0\\
&=e^{\frac{i}{\hbar}\delta(t,z_0)} \th(z_t) \mhm\st(z_0)\mhp\phi^\hbar_0\notag\\
&=e^{\frac{i}{\hbar}\delta(t,z_0)} \th(z_t) \mhm\st(z_0)\phi_0,\notag
\end{align}
where $\delta(t,z_0)$ is the symmetrized action defined in \eqref{symmaction}. More generally, we will consider higher order approximations $\phi^{\hbar,N}_{z_0}(t)$ in the form \eqref{solapprox}.\par
To this end we will have to estimate the remainder term
\begin{equation}\label{resto}
R^{(N)}_{z_0}(t,\cdot):=(i\hbar \partial_t -\widehat{H(t)})\phi^{\hbar,N}_{z_0}(t,\cdot).
\end{equation}
The following computation of $R^{(N)}_{z_0}$ were carried out in \cite{hagedorn1980semiclassical,hagedorn1981semiclassical} for Hamiltonians of the form $H(t,x,p)=V(x)+p^2/2$ and in \cite{combescure2012quadratic,robert2004propagation} for more general Hamiltonians with polynomial growth. Here we briefly sketch the main points for the benefit of the reader, because the formula given in \cite[(70)]{robert2004propagation} for $R^{(N)}_{z_0}$ contains a number of misprints. An explicit computation show that
\begin{align}\label{uno}
i\hbar \partial_t e^{\frac{i}{\hbar}\delta(t,z_0)}
&=-e^{\frac{i}{\hbar}\delta(t,z_0)}\Big(\frac{1}{2}\sigma(z_t,\dot{z}_t)-H(t,z_t)\Big)\\
&=- e^{\frac{i}{\hbar}\delta(t,z_0)}\Big(\frac{1}{2}(p_t\dot{x_t}-\dot{p_t}x_t)-H(t,z_t)\Big). \nonumber
\end{align}
On the other hand, given a function $f(x)$ we have
\begin{align}
i\hbar \partial_t\th(z_t) f(x)&=
e^{\frac{i}{\hbar}(p_t x-x_tp_t/2)}
\Big((-\dot{p_t} x+\frac{\dot{p_t}x_t+p_t\dot{x_t}}{2})f(x-x_t)-i\hbar\dot{x_t} \nabla f(x-x_t)\Big)\\
&=\th(z_t)\Big(-\dot{p_t}x-\dot{p_t}x_t+\frac{\dot{p_t}x_t+p_t\dot{x_t}}{2}-i\hbar\dot{x_t}\nabla  \Big) f(x)\nonumber\\
&=\th(z_t)\Big(\frac{p_t\dot{x_t}-\dot{p_t}x_t}{2}+x \partial_x H(t,z_t)+\hbar \partial_p H(t,z_t)(-i\nabla)\Big)f(x),\nonumber
\end{align}
which implies
\begin{multline}\label{due}
i\hbar \partial_t\th(z_t) \mhm\\
=\th(z_t)\mhm\Big[i\hbar\partial_t+\frac{p_t\dot{x_t}-\dot{p_t}x_t}{2}+\hbar^{1/2} x \partial_x H(t,z_t)+\hbar^{1/2}\partial_p H (t,z_t)(-i\nabla)\Big].
\end{multline}
We also have the covariance formula
\[
\widehat{H(t)}\th(z_t)=\th(z_t){\rm Op}^w_1[H(t,x+x_t,\hbar(p+\hbar^{-1}p_t)]
\]
which implies
\begin{equation}\label{tre}
\widehat{H(t)}\th(z_t)\mhm=\th(z_t)\mhm{\rm Op}^w_1[H(t, h^{1/2}x+x_t,\hbar^{1/2}p+p_t)].
\end{equation}
Finally a Taylor expansion yields
\begin{multline}\label{quattro}
H(t, \hbar^{1/2}x+x_t,\hbar^{1/2}p+p_t)=H(t,z_t)+\hbar^{1/2} x\partial_x H(t,z_t)+\hbar^{1/2} p\partial_p H(t,z_t)\\
+\hbar H^{(2)}_{z_0}(t,X)
+\sum_{l=3}^{N+2} \hbar^{l/2} H^{(l)}_{z_0}(t,X)+\hbar^{(N+3)/2} r^{(N+3)}_{z_0}(t,X),
\end{multline}
where
\[
H^{(l)}_{z_0}(t,X)=\sum_{|\gamma|=l}\frac1 {\gamma!}\partial^\gamma_X H(t,z_t)X^\gamma,\quad X\in\rdd
\]
and
\[
r^{(N+3)}_{z_0}(t,X)=\frac{1}{(N+3)!}\sum_{|\gamma|=N+3} \int_0^1 \partial^\gamma_X H(t,z_t+\theta\hbar^{1/2} X) X^\gamma (1-\theta)^{N+2}\, d\theta.
\]
By the formulas \eqref{uno}--\eqref{quattro} we obtain
\begin{align*}
&(i\hbar\partial_t -\widehat{H(t)})\phi^{\hbar,N}_{z_0}(t,\cdot)=e^{\frac{i}{\hbar}\delta(t,z_0)}\th(z_t)\mhm\Big(i\hbar\partial_t-\hbar {\rm Op}^w_1[H^{(2)}_{z_0}(t,X)]\\
&-\sum_{l=3}^{N+2}\hbar^{l/2}{\rm Op}^w_1[H^{(l)}_{z_0}(t,X)]-\hbar^{(N+3)/2}{\rm Op}^w_1[r^{(N+3)}_{z_0}(t,X)])\Big)\st(z_0)\sum_{j=0}^N \hbar^{j/2}b_j(t,\cdot)\phi_0\\
&=e^{\frac{i}{\hbar}\delta(t,z_0)}\th(z_t)\mhm\st(z_0)\Big(i\hbar\partial_t
-\sum_{l=3}^{N+2}\hbar^{l/2}{\rm Op}^w_1[H^{(l)}_{z_0}(t,S_t(X))]\\
&\qquad\qquad\qquad\qquad\qquad\quad-\hbar^{(N+3)/2}{\rm Op}^w_1[r^{(N+3)}_{z_0}(t,S_t(X))]\Big)\sum_{j=0}^N \hbar^{j/2}b_j(t,\cdot)\phi_0.
\end{align*}
Now we choose $b_0(t,X)=1$ and $b_j(t,X)$, $j\geq 1$ solutions to
\[
\begin{cases}
i\partial_t b_j(t,X)\phi_0(x)=\sum_{l+k=j+2\atop l\geq 3}{\rm Op}^w_1[H^{(l)}_{z_0}(t,S_t(X))](b_k(t,\cdot)\phi_0)(x)\\
b_j(t,X)=0.
\end{cases}
\]
\begin{remark}
Since $\phi_0$ is a Gaussian function and ${\rm Op}^w_1[H^{(l)}_{z_0}(t,S_t(X))]$ are differential operators with polynomial coefficients depending on $t,z_0$, we see that $b_j(t,X)$ is a polynomial in $X$, having coefficients depending on $t,z_0$ which are bounded, for the entries of the matrix $S_t$,  as functions of $t\in [0,T], z_0\in\rdd$, are bounded, as well as the coefficients of the polynomial $H^{(l)}_{z_0}(t,X)$, by the assumption \eqref{ipotesi}.
\end{remark}
With this choice of $b_j(t,X)$ we finally obtain the desired formula for $R^{(N)}_{z_0}$:
\begin{multline}\label{cinque}
R^{(N)}_{z_0}(t,\cdot)=(i\hbar \partial_t -\widehat{H(t)})\phi^{\hbar,N}_{z_0}(t,\cdot)\\
=-e^{\frac{i}{\hbar}\delta(t,z_0)}\th(z_t)\mhm\st(z_0) \Big(
\sum_{l+k\geq N+3\atop {3\leq l\leq N+2 \atop 0\leq k\leq N}}\hbar^{(l+k)/2}{\rm Op}^w_1[H^{(l)}_{z_0}(t,S_t(X)) ]b_k(t,\cdot)\phi_0\\
+\sum_{k=0}^N \hbar^{(N+3+k)/2}{\rm Op}^w_1[r^{(N+3)}_{z_0}(t,S_t(X))]b_k(t,\cdot)\phi_0\Big).
\end{multline}
\subsection{Bounds for the parametrix: proof of Theorems \ref{mainteo}, \ref{mainteo1}}
From now on we work with the lattice $\Lambda=\alpha\zd \times\beta\zd$, $\alpha,\beta>0$.
We shall need the preliminary estimate below.
\begin{theorem}\label{mainteo2}
Let $\G^\hbar(\phi_0^\hbar,h^{1/2}\Lambda)$ be a $\hbar$-Gabor frame. There exists a constant $C=C(T)$ such that, for every sequence
 $\{c_z:\,z\in h^{1/2}\Lambda\}\in \ell^2(h^{1/2}\Lambda)$ we have
\begin{equation}\label{teoa}
\|\sum_{z\in h^{1/2}\Lambda}c_z \phi^{\hbar,N}_z(t,\cdot)\|_{L^2(\rd)}\leq C \Big(\sum_{z\in h^{1/2}\Lambda}|c_z|^2\Big)^{1/2}\quad \forall t\in[0,T]
\end{equation}
and
\begin{equation}\label{teob}
\|\sum_{z\in h^{1/2}\Lambda}c_z (i\hbar \partial_t-\widehat{H(t)}) \phi^{\hbar,N}_z(t,\cdot)\|_{L^2(\rd)}\leq C\hbar^{(N+3)/2}\Big(\sum_{z\in h^{1/2}\Lambda}|c_z|^2\Big)^{1/2}\quad \forall t\in[0,T].
\end{equation}
\end{theorem}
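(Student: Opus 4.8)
The plan is to observe that both \eqref{teoa} and \eqref{teob} assert the uniform (in $t\in[0,T]$ and in $\hbar\in(0,1]$) boundedness $\ell^2(h^{1/2}\Lambda)\to\lrd$ of a synthesis operator, and to reduce both to one Bessel-type estimate. Writing \eqref{solapprox} with a generic centre $z$ in place of $z_0$, each beam has the form $\phi^{\hbar,N}_z(t,\cdot)=e^{\frac i\hbar\delta(t,z)}\,\th(z_t)\,\mhm\,\st(z)\,\Psi_z$ with $\Psi_z:=\sum_{j=0}^N\hbar^{j/2}b_j(t,\cdot)\phi_0$; and by formula \eqref{cinque} (again with $z$ for $z_0$) the remainder has the \emph{same} form,
\[
(i\hbar\partial_t-\widehat{H(t)})\phi^{\hbar,N}_z(t,\cdot)=-\hbar^{(N+3)/2}\,e^{\frac i\hbar\delta(t,z)}\,\th(z_t)\,\mhm\,\st(z)\,\widetilde\Psi_z ,
\]
where $\widetilde\Psi_z$ is what remains of the two finite sums in \eqref{cinque} after the common factor $\hbar^{(N+3)/2}$ is extracted (legitimate, as every exponent of $\hbar$ there is $\ge(N+3)/2$: $l+k\ge N+3$ in the first sum, $N+3+k\ge N+3$ in the second). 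Since the exponential prefactors are unimodular, they can be absorbed into the coefficients. Thus it suffices to prove the claim: \emph{if $\{\Psi_z\}$ is a family of Schwartz functions with Schwartz seminorms bounded uniformly in $z\in\rdd$, $t\in[0,T]$, $\hbar\in(0,1]$, then} $\big\|\sum_{z\in h^{1/2}\Lambda}c_z\,\th(z_t)\,\mhm\,\st(z)\Psi_z\big\|_{\lrd}\le C\big(\sum_{z\in h^{1/2}\Lambda}|c_z|^2\big)^{1/2}$ \emph{with $C=C(T)$.} Granting this, \eqref{teoa} is the claim for $\Psi_z=\Psi_z(t)$ (the $b_j$ being polynomials with coefficients bounded uniformly in $t,z$, by the Remark after \eqref{quattro}), while \eqref{teob} is $\hbar^{(N+3)/2}$ times the claim for $\Psi_z=\widetilde\Psi_z$; here $\widetilde\Psi_z$ also has uniformly bounded Schwartz seminorms, being a finite sum of terms $\mathrm{Op}^w_1[H^{(l)}_z(t,S_t(\cdot))](b_k\phi_0)$ and $\mathrm{Op}^w_1[r^{(N+3)}_z(t,S_t(\cdot))](b_k\phi_0)$, whose symbols are polynomials of degree $\le N+3$ with smooth coefficients bounded, with all derivatives, uniformly in $t,z,\hbar$ (for the Taylor remainder, differentiating its integral formula only produces $\partial^\gamma_X H$ with $|\gamma|\ge N+3\ge2$, bounded by \eqref{ipotesi}, together with the uniformly bounded entries of $S_t$ from Remark \ref{remark3.2}); such operators map the bounded family $\{b_k\phi_0\}\subset\cS(\rd)$ to a bounded family in $\cS(\rd)$.

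I would prove the claim by a Schur test on the Gram matrix of the atoms $g_z:=\th(z_t)\,\mhm\,\st(z)\Psi_z$. The synthesis operator $C$ satisfies $\|C\|^2=\|C^\ast C\|\le\sup_{z}\sum_{z'}|\langle g_z,g_{z'}\rangle|$ by Schur's test. Using that $\mhm$ is unitary, the Weyl composition law $\th(w_1)\th(w_2)=(\text{unimodular})\,\th(w_1+w_2)$, and the covariance identity $\mhp\,\th(w)\,\mhm=\th(M_{\hbar^{1/2}}w)$ (a special case of \eqref{CF}, as $\pi^\hbar(\mhm)=M_{\hbar^{-1/2}}$), one reduces the Gram entry to $\big|\langle\th(M_{\hbar^{1/2}}(z_t-z'_t))\,\st(z)\Psi_z,\ \st(z')\Psi_{z'}\rangle\big|$. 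Here $\st(z)\Psi_z$ and $\st(z')\Psi_{z'}$ range over a bounded subset of $\cS(\rd)$: the $\Psi_z$ do by hypothesis, and since the entries of $S_t(z)$ and $S_t(z)^{-1}$ are uniformly bounded (Remark \ref{remark3.2}) the metaplectic operators $\st(z)$ act with uniform constants on any fixed finite collection of Schwartz seminorms (equivalently, one may run the argument in $M^1(\rd)$, where $\|\st(z)u\|_{M^1}\lesssim\|u\|_{M^1}$ uniformly). Since $M_{\hbar^{1/2}}(z_t-z'_t)=\big(\hbar^{-1/2}(x_t-x'_t),\,\hbar^{1/2}(p_t-p'_t)\big)$ and $\th(x_0,p_0)u(x)=e^{i\hbar^{-1}(p_0x-x_0p_0/2)}u(x-x_0)$ with $\hbar^{-1}\hbar^{1/2}=\hbar^{-1/2}$, the inner product above is, up to a unimodular factor, the Fourier transform of a product of two rapidly decreasing functions from a bounded-$\cS$ family, the first translated by $\hbar^{-1/2}(x_t-x'_t)$ and evaluated at a frequency proportional to $\hbar^{-1/2}(p_t-p'_t)$; hence for every $M>0$,
\[
|\langle g_z,g_{z'}\rangle|\le C_M\big(1+\hbar^{-1/2}|z_t-z'_t|\big)^{-M},
\]
uniformly in $z,z'\in h^{1/2}\Lambda$, $t\in[0,T]$ and $\hbar\in(0,1]$.

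It remains to sum this decay over $z'$. The time-$t$ flow $z\mapsto z_t$ of $H$ is a diffeomorphism of $\rdd$ whose differential is $S_t(z)$ and whose inverse has differential $S_t(z)^{-1}$, both bounded uniformly in $t\in[0,T]$ (Remark \ref{remark3.2}); hence $z\mapsto z_t$ is bi-Lipschitz with constants depending only on $T$, so $\{z_t:z\in h^{1/2}\Lambda\}$ is relatively separated with separation $\gtrsim h^{1/2}\asymp\hbar^{1/2}$. Taking $M>2d$ and rescaling the lattice by $\hbar^{1/2}$, the sum $\sum_{z'\in h^{1/2}\Lambda}\big(1+\hbar^{-1/2}|z_t-z'_t|\big)^{-M}$ is dominated by $C'_M\sum_{k\in\zdd}(1+|k|)^{-M}<\infty$, uniformly in $z,t,\hbar$; thus $\sup_z\sum_{z'}|\langle g_z,g_{z'}\rangle|<\infty$, which bounds $\|C\|$ and proves the claim, hence the theorem. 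I expect the Gram-matrix estimate to be the main obstacle: one must keep the $\hbar$-dependent dilations $\mhm,\mhp$ glued to the nonlinear flow $z\mapsto z_t$, so that the two separately $\hbar$-divergent ingredients — $\mhm\st(z)\Psi_z$ is \emph{not} bounded in $\cS(\rd)$ as $\hbar\to0$, and the centres $\{z_t\}$ are \emph{not} separated on a fixed scale — recombine into the single uniformly controlled bound $(1+\hbar^{-1/2}|z_t-z'_t|)^{-M}$, matched precisely to the $\hbar^{1/2}$-scale of the lattice $h^{1/2}\Lambda$; all constants are uniform in $t\in[0,T]$, which is why the final $C$ depends only on $T$.
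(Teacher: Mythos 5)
Your proposal is correct and follows essentially the same route as the paper: both estimates are reduced to an off-diagonal decay bound $|\langle g_z,g_{z'}\rangle|\lesssim (1+\hbar^{-1/2}|z_t-z'_t|)^{-M}$ for the evolved atoms (via uniform Schwartz bounds on $\widehat{S_t}(z)\Psi_z$ and the $\cS\times\cS\to\cS(\rdd)$ continuity of the time-frequency pairing), summed uniformly in $\hbar$ over the $h^{1/2}$-lattice using the Lipschitz properties of the Hamiltonian flow, with the factor $\hbar^{(N+3)/2}$ extracted from \eqref{cinque} for \eqref{teob}. The only differences are cosmetic: you invoke the Schur test on the Gram matrix where the paper uses Cauchy--Schwarz plus Young's convolution inequality with a kernel $G(z-z')$, and you sum using the separation of the images $\{z_t\}$ rather than converting back to $|z-z'|$ through the Lipschitz inverse of the flow.
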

\begin{proof}
Let us prove \eqref{teoa}. We have
\[
\|\sum_{z\in h^{1/2}\Lambda} c_z\phi^{\hbar,N}_z(t,\cdot) \|^2_{L^2(\rd)}=\sum_{z,z'\in h^{1/2}\Lambda} c_z\overline{c_{z'}}\langle \phi_z^{\hbar,N}(t,\cdot), \phi_{z'}^{\hbar,N}(t,\cdot)\rangle.
\]
We will prove that
\begin{equation}\label{sei}
|\langle \phi_z^{\hbar,N}(t,\cdot), \phi_{z'}^{\hbar,N}(t,\cdot)\rangle|\leq G(z-z'),
\end{equation}
where
$G\in \ell^1(h^{1/2}\Lambda)$, with $\ell^1$-norm independent of $\hbar$.
By the Cauchy-Schwarz inequality in $\ell^2(h^{1/2}\Lambda)$ and  Young inequality we have
\begin{align*}
\|\sum_{z\in h^{1/2}\Lambda} c_z\phi^\hbar_z(t,\cdot) \|^2_{L^2(\rd)}&\leq \Big(\sum_{z\in h^{1/2}\Lambda}|c_z|^2\Big)^{1/2}\Big(\sum_{z\in h^{1/2} \Lambda} |\sum_{z'\in h^{1/2}\Lambda} G(z-z')\overline{c_{z'}}|^2\Big)^{1/2}\\
&\leq \|G\|_{\ell^1(h^{1/2}\Lambda)}\sum_{z\in  h^{1/2}\Lambda}|c_z|^2.
\end{align*}
It remains to prove \eqref{sei}.
If we write down explicitly the expression of the functions $\phi_z^{\hbar,N}(t,\cdot)$, $\phi_{z'}^{\hbar,N}(t,\cdot)$, we see that it is sufficient to prove that
\begin{equation}\label{sette}
|\langle\th(z_t)\mhm\st(z)g_1, \th(z'_t)\mhm\st(z')g_2\rangle|\leq G(z-z'),
\end{equation}
with $G\in \ell^1( h^{1/2}\Lambda)$ as above, when $g_1,g_2$ are Schwartz functions. \par
If we denote by $I_{z,z'}$ the left-hand side of \eqref{sette} we have, with $z_t=(x_t,p_t)$, $z'_t=(x'_t,p'_t)$,
\begin{align*}
I_{z,z'}&=|\langle \th(z_t-z'_t) \mhm \st(z) g_1,\mhm\st(z')g_2 \rangle|\\
&=  |\langle \th(\hbar^{-1/2}(x_t-x'_t),\hbar^{1/2}(p_t-p'_t))\st(z)g_1,\st(z')g_2 \rangle| \\
&= |\langle \t^1(\hbar^{-1/2}(z_t-z'_t))\st(z)g_1,\st(z')g_2 \rangle|.
\end{align*}
Here $\t^1$ stands for $\th$ with $\hbar=1$.\par
Now, metaplectic operators are bounded $\cS(\rd)\to\cS(\rd)$ \cite{deGossonbook} and, as already observed in Remark \ref{remark3.2}, the entries of the matrix $S_t(z)$ are bounded as functions of $t\in[0,T]$ and $z\in\rdd$. Hence the functions $\st(z)g_1$ and $\st(z')g_2$ are Schwartz functions with Schwartz seminorms bounded with respect to $t,z,z'$.\par Since the map (essentially the Short-time Fourier transform in Time-frequency Analysis)
\[
(f_1,f_2)\mapsto \langle\t^1(z)f_1,f_2\rangle
\]
is continuous $\cS(\rd)\times\cS(\rd)\to\cS(\rdd)$ \cite{Gro}, there exists a constant $C=C(T)$ such that
\begin{align*}
I_{z,z'}&\leq C(1+\hbar^{-1/2}|z_t-z'_t|)^{-(2d+1)}\\
&\leq C(1+ h^{-1/2}|z-z'|)^{-(2d+1)}
\end{align*}
where in the last inequality we used the fact that the maps $z\to z_t$ has an inverse which is globally Lipschitz continuous. This easily follows from the assumption \eqref{ipotesi} (see e.g. \cite{fio3}).
Hence \eqref{sei} is verified with $G(z)=(1+h^{-1/2}|z|)^{-(2d+1)}$, and
\[
\|G\|_{\ell^1( h^{1/2}\Lambda)}=\sum_{\tilde{z}\in \Lambda} (1+|\tilde{z}|)^{-(2d+1)}=C'<+\infty,
\]
and $C'$ is of course independent of $\hbar$.\par
Let us now prove \eqref{teob}. The proof is similar to that of \eqref{teoa}. Indeed, by arguing as above and using the expression for $R^{(N)}_z(t,x)$  in \eqref{cinque} we see first of all that we gain a further factor $\hbar^{(N+3)/2}$. Moreover we are again reduced to estimate terms of the same form as in \eqref{sette}, where now the functions $g_1$ and $g_2$ depend on $t\in[0,T],\ z,z'\in\rdd$. The point is that their seminorms in the Schwartz space remain bounded. This follows from the fact that in \eqref{cinque} the functions $b_j(t,\cdot)\phi_0$ have seminorms in the Schwartz space bounded with respect to $t\in[0,T]$ and $z_0\in\rdd$, and the pseudodifferential operators which act on them have symbols in the H\"ormander class $S^0_{0,0}$ (i.e.\ bounded together with all their derivatives) with seminorms uniformly bounded with respect to $t,z_0$ (which in turns is a consequence of Remark \ref{remark3.2}).  This concludes the proof of \eqref{teob}.\par
\end{proof}

\begin{proof}[Proof of Theorem \ref{mainteo}]
The bounds \eqref{teoa0} and \eqref{teob0} follow at once from \eqref{teoa} and \eqref{teob} with
\[
c_z=\langle f,\th(z)\gamma^h\rangle.
\]
Indeed, we have
\[
f=\sum_{z\in  h^{1/2}\Lambda}\langle f,\th(z)\gamma^h\rangle \phi_z^\hbar
\]
and
\[
\Big(\sum_{z\in  h^{1/2}\Lambda} |c_z|^2\Big)^{1/2}\leq C \|f\|_{L^2},
\]
for a constant $C>0$ independent of $\hbar$, by Proposition \ref{P3}.\par
Finally we prove \eqref{teoc0}. This follows from Duhamel formula: if $U(t,s)$ is the exact propagator, with $U(s,s)=I$, and $U(t)=U(t,0)$, we have
\[
U^{(N)}(t)f-U(t)f=-\frac{i}{\hbar}\int_0^t U(t,s)\Big(i\hbar\partial_s-\widehat{H(s)} \Big)U^{(N)}(s)f\,ds,
\]
and \eqref{teoc0} follows from \eqref{teob0} and the the fact that $U(t,s)$ is a unitary operator in $L^2(\rd)$.
\end{proof}
\begin{proof}[Proof of Theorem \ref{mainteo1}]
The estimates \eqref{teoa1}, \eqref{teob1} follow from Theorem \ref{mainteo2}.\par
Concerning \eqref{teoc1} we see that now Duhamel's formula reads
\[
U^{(N)}_\eta(t)[f]-U(t)f=U^{(N)}_\eta(0)[f]-U(0)f-\frac{i}{\hbar}\int_0^t U(t,s)\Big(i\hbar\partial_s-\widehat{H(s)} \Big)U^{(N)}_\eta(s)[f]\,ds,
\]
which gives the desired result, using \eqref{teob1} and the representation
\[
U^{(N)}_\eta(0)[f]-U(0)f=\sum_{z\not\in A_{\eta,f}} \langle f,\th(z)\gamma^h\rangle\phi^\hbar_z.
\]
This concludes the proof.
\end{proof}
\section{Higher order deformation of frames}
With the above notation, consider the function
\[
\phi^{\hbar,N}_0(t,x)= e^{\frac{i}{\hbar}\delta(t,0)}\th(z_t)\sht(0)\sum_{j=0}^N \hbar^{j/2} b_j(t,\hbar^{-1/2} x)\phi_0^\hbar(x).
\]
Here $z_t$ is the integral curve of the Hamiltonian system \eqref{HS} with initial condition $z_0=0$. Also, the $b_j$'s are constructed as in Section 3.2 by considering as initial value the Gaussian $\phi^\hbar_0$, which is centered at $z_0=0$.
Let $f_t$ be the Hamiltonian flow defined by $H(t,X)$. In particular $z_t=f_t(0)$.\par
The following result extends \cite[Proposition 18]{deGossonACHA}; it reduces to that result for $N=0$, at least for $t\hbar^{1/2}$ small enough.
\begin{theorem}\label{teodeformazione}
Assume the validity of {\bf Assumption (H)} and consider a $\hbar$-Gabor frame $\mathcal{G}^\hbar(\phi_0^\hbar,  h^{1/2}\Lambda)$. Then there exists a constant $\epsilon>0$, depending only on $H$, the frame bounds of $\mathcal{G}^\hbar(\phi_0^\hbar, h^{1/2}\Lambda)$ (which are independent of $\hbar$ by Proposition \ref{P3}) and $\Lambda$, such that for $t\in [0,T]$, $t\hbar^{1/2}<\epsilon$, the system $\mathcal{G}^\hbar(\phi_0^{\hbar,N}(t,\cdot),f_t( h^{1/2}\Lambda))$ is a $\hbar$-Gabor frame, with frame bounds independent of $\hbar$.
\end{theorem}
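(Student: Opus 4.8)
\medskip
The plan is perturbative: to show that $\mathcal{G}^\hbar(\phi_0^{\hbar,N}(t,\cdot),f_t(h^{1/2}\Lambda))$ is, in the regime $t\hbar^{1/2}<\epsilon$, a small perturbation of a \emph{metaplectically deformed} version of the undeformed $\hbar$-Gabor frame, which by symplectic covariance inherits its (by Proposition~\ref{P3}, $\hbar$-independent) frame bounds. Concretely, since $b_0\equiv 1$ the $N=0$ beam from the origin is $\phi_0^{\hbar,0}(t,\cdot)=e^{\frac i\hbar\delta(t,0)}\th(z_t)\sht(0)\phi_0^\hbar$; pulling the metaplectic factor $\sht(0)$ through the phase-space shifts by means of \eqref{CF}, the system $\mathcal{G}^\hbar(\phi_0^{\hbar,0}(t,\cdot),\Gamma_t)$ with $\Gamma_t:=\pi^\hbar(\sht(0))(h^{1/2}\Lambda)$ coincides — up to unimodular factors and a translation of the index set, which affect neither the frame property nor the bounds — with the image of $\mathcal{G}^\hbar(\phi_0^\hbar,h^{1/2}\Lambda)$ under the unitary $\sht(0)$. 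Hence $\mathcal{G}^\hbar(\phi_0^{\hbar,0}(t,\cdot),\Gamma_t)$ is a $\hbar$-Gabor frame with bounds $a,b$ independent of $\hbar$ and, by Remark~\ref{remark3.2}, of $t\in[0,T]$. This is de Gosson's $N=0$ deformation, and the task is to compare the given system to it.

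Two perturbations must be controlled. The first is the window correction $\phi_0^{\hbar,N}-\phi_0^{\hbar,0}$: from the explicit formula for $\phi_0^{\hbar,N}$ and the fact (Remark after \eqref{cinque}) that the $b_j$'s are polynomials with coefficients bounded on $[0,T]\times\rdd$, a change of variables gives $\|\phi_0^{\hbar,N}(t,\cdot)-\phi_0^{\hbar,0}(t,\cdot)\|_{L^2}\le C\hbar^{1/2}$ uniformly in $t$; so every atom changes by $\le C\hbar^{1/2}$ in $L^2$, \emph{uniformly in the lattice point}, and the associated perturbation of the frame operator is $O(\hbar)$ in operator norm by the almost-orthogonality/Schur estimate of the proof of Theorem~\ref{mainteo2} (the Gram entries being $\langle\th(w-w')\chi,\chi\rangle$, $\chi=\phi_0^{\hbar,N}-\phi_0^{\hbar,0}$, with fast decay and $\ell^1$-sum over the uniformly relatively separated set $f_t(h^{1/2}\Lambda)$ bounded independently of $\hbar$). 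This reduces the problem to $N=0$, modulo a smallness condition on $\hbar$ subsumed by $t\hbar^{1/2}<\epsilon$.

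The second, decisive, perturbation is the replacement of the flow-warped set $f_t(h^{1/2}\Lambda)$ by the linearly-warped lattice $\Gamma_t$, and of the \emph{local} metaplectic profile $\sht(z)$ (dictated by $Df_t(z)=S_t(z)$) by the single profile $\sht(0)$ carried by all the atoms. The relevant structural facts, all consequences of Assumption~(H): $z\mapsto S_t(z)^{\pm1}$ are bounded uniformly in $t,z$, with $|S_t(z)-S_t(0)|\le C(T)\,t\,|z|$ (a Gr\"onwall estimate on the variational equation) and $f_t(z)=z_t+S_t(0)z+\rho_t(z)$, $|\rho_t(z)|\le C(T)\,t\,|z|^2$; $f_t$ is bi-Lipschitz uniformly in $t$, so $f_t(h^{1/2}\Lambda)$ is relatively separated and relatively dense uniformly in $t,\hbar$; and the atoms live at the semiclassical scale $\hbar^{1/2}$, comparable to the lattice spacing $h^{1/2}=(2\pi\hbar)^{1/2}$, so that over one window-coherence length the local data $S_t(z)$ vary by only $O(t\hbar^{1/2})$. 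A partition of unity in phase space — in the spirit of the variable/warped-Gabor-frame techniques of \cite{feichtinger2004varying,grochenig2013deformation} — then writes the frame operator of $\mathcal{G}^\hbar(\phi_0^{\hbar,N}(t,\cdot),f_t(h^{1/2}\Lambda))$ as a superposition of pieces, each $O(t\hbar^{1/2})$-close in operator norm to the frame operator of some $\mathcal{G}^\hbar(\widehat S\phi_0^\hbar,\pi^\hbar(\widehat S)(h^{1/2}\Lambda))$, $\widehat S\in Mp(d)$, all of which share the bounds $a,b$; a Neumann-series (Christensen-type) argument then yields that the given system is a frame with bounds arbitrarily close to $a,b$ — hence $\hbar$-independent — provided $C(T)\,t\hbar^{1/2}$ lies below an explicit threshold depending on $a,b$ and $\Lambda$. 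This is $\epsilon$.

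The main obstacle is making the last step rigorous: as $f_t$ is a genuine nonlinear diffeomorphism, $f_t(h^{1/2}\Lambda)$ is \emph{not} a uniformly small perturbation of $\Gamma_t$ (the discrepancy grows with the lattice index), so jitter-stability results do not apply off the shelf; one has to exploit that at the scale $\hbar^{1/2}$ the flow is affine up to $O(t\hbar^{1/2})$. This is precisely why the statement is confined to $t\hbar^{1/2}<\epsilon$, rather than holding for all $t$ as in \cite[Proposition 18]{deGossonACHA}, where the underlying deformation is linear and thus exact; for $N=0$ the window correction is absent and the argument collapses to the deformation estimate, recovering de Gosson's proposition in the range $t\hbar^{1/2}<\epsilon$.
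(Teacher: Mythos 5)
Your proposal does not follow the paper's route, and the step you yourself flag as the ``main obstacle'' is precisely the one left unproved, so as it stands there is a genuine gap. The paper never compares the warped set $f_t(h^{1/2}\Lambda)$ with a linearly warped lattice. It writes $\phi_0^{\hbar,N}(t,\cdot)=e^{\frac{i}{\hbar}\delta(t,0)}\widehat{\mathcal{T}}^\hbar(z_t)\widehat{S_t}^\hbar(0)\tilde{\phi}_0^{\hbar,N}(t,\cdot)$ with $\tilde{\phi}_0^{\hbar,N}(t,x)=\sum_{j=0}^N\hbar^{j/2}b_j(t,\hbar^{-1/2}x)\phi_0^\hbar(x)$, and then, arguing exactly as in \cite[Proposition 18]{deGossonACHA}, reduces the statement to the claim that $\mathcal{G}^\hbar(\tilde{\phi}_0^{\hbar,N}(t,\cdot),h^{1/2}\Lambda)$ is a frame over the \emph{undeformed} lattice: the nonlinear deformation of the index set, the phase, the shift $\widehat{\mathcal{T}}^\hbar(z_t)$ and the metaplectic factor are all absorbed by the quoted argument. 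In your scheme this part is replaced by a local affine approximation of $f_t$, a phase-space partition of unity and a Neumann series; but lower frame bounds do not glue across a partition of phase space, the discrepancy $f_t(w)-(z_t+S_t(0)w)$ grows like $t|w|^2$ so no global jitter estimate applies (as you concede), and you supply no actual mechanism in its place --- one would need genuine deformation machinery in the spirit of \cite{grochenig2013deformation}, with $\hbar$-uniform constants, which is not provided. Note also that the ``local metaplectic profile'' $\widehat{S_t}^\hbar(z)$ you propose to replace is not present in the system of the theorem at all: every atom carries the same window $\phi_0^{\hbar,N}(t,\cdot)$, so this complication is spurious (you may be conflating this system with the family of beams $\phi_z^{\hbar,N}$ appearing in Theorem \ref{mainteo2}).

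The window perturbation is also quantitatively off. You bound $\|\phi_0^{\hbar,N}(t,\cdot)-\phi_0^{\hbar,0}(t,\cdot)\|_{L^2}$ by $C\hbar^{1/2}$ uniformly in $t$ and declare the resulting smallness ``subsumed by $t\hbar^{1/2}<\epsilon$''; it is not, since the hypothesis allows $\hbar$ of order one with $t$ small, in which case $\hbar^{1/2}$ is not small. The factor $t$ is essential and is exactly what the paper extracts: since $b_j(0,\cdot)=0$ for $j\geq1$, the correction $\psi_0^{\hbar,N}(t,\cdot)=\sum_{j=1}^N\hbar^{j/2}b_j(t,\hbar^{-1/2}\cdot)\phi_0^\hbar$ is of size $O(t\hbar^{1/2})$; the paper then inserts the dual-window expansion of $f$, applies Young's inequality, and reduces everything to the off-diagonal bound $|\langle \widehat{\mathcal{T}}^\hbar(z)\phi_0^\hbar,\widehat{\mathcal{T}}^\hbar(z')\psi_0^{\hbar,N}(t,\cdot)\rangle|\leq Ct\hbar^{1/2}(1+h^{-1/2}|z-z'|)^{-(2d+1)}$, concluding by the triangle inequality once $(C't\hbar^{1/2})^2\leq a/(2b)$. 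Your Schur-type treatment of this piece is close in spirit and could be repaired by keeping the factor $t$, but the first gap is structural: without either reproducing de Gosson's reduction or carrying out your partition-of-unity scheme in full, the theorem is not proved.
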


\begin{proof}
Let
\[
\tilde{\phi}^{\hbar,N}_0(t,x)=\sum_{j=0}^N \hbar^{j/2} b_j(t,\hbar^{-1/2} x)\phi_0^\hbar(x),
\]
so that
\[
{\phi}^{\hbar,N}_0(t,\cdot)= e^{\frac{i}{\hbar}\delta(t,0)}\th(z_t)\sht(0)\tilde{\phi}^{\hbar,N}_0(t,\cdot).
\]
By arguing exactly as in \cite[Proposition 18]{deGossonACHA}, we see that it suffices to prove that $\mathcal{G}^\hbar(\tilde{\phi}_0^{\hbar,N}(t,\cdot), h^{1/2}\Lambda)$ is a Gabor frame, at least for $t\hbar^{1/2}$ small enough. By assumption we have
\[
a\|f\|^2_{L^2(\rd)}\leq\sum_{z\in  h^{1/2}\Lambda}|\langle f, \th(z) \phi^\hbar_0\rangle|^2\leq b\|f\|_{L^2(\rd)}
\]
for some $0<a\leq b$ independent of $\hbar$.\par
Set
\[
\psi_0^{\hbar,N} (t,x):=\tilde{\phi}^{\hbar,N}_0(t,x)-\phi^\hbar_0(x)=\sum_{j=1}^N \hbar^{j/2}b_j(t,\hbar^{-1/2} x)\phi_0^\hbar (x)
\]
(we used the fact that $b_0(t,x)=1$). By the triangle inequality it is sufficient to prove that
\[
\sum_{z\in h^{1/2}\Lambda}|\langle f,\th(z) \psi_0^{\hbar,N}(t,\cdot)|^2\leq \frac{a}{2}\|f\|^2_{L^2(\rd)}.
\]
 Using the representation
\[
f=\sum_{z\in  h^{1/2}\Lambda} \langle f, \th(z)\gamma^h\rangle \th(z)\phi^\hbar_0
\]
with $\gamma^h$ being a dual window in $\cS(\rd)$, and using Young inequality in $\ell^2( h^{1/2}\Lambda)$ we are reduced to prove that
\[
I_{z,z'}:=|\langle \th(z)\phi_0^\hbar,\th(z')\psi^{\hbar,N}_0(t,\cdot) \rangle|\leq G(z-z')
\]
for some $G\in \ell^1( h^{1/2}\Lambda)$ with $\|G\|^2_{\ell^1}\leq a/(2b)$.\
\par
Now, setting
\[
\psi_0(t,x):=\sum_{j=1}^N \hbar^{j/2}b_j(t,x)\phi_0(x)=\hbar^{d/4}\psi^{\hbar,N}_0(t,\hbar^{1/2}x)
\]
and $z=(x_0,p_0)$, $z'=(x_0',p_0')$, we have
\begin{align*}
I_{z,z'}&=|\langle \th(z-z')\phi^\hbar_0,\psi^{\hbar,N}_0\rangle|\\
&=\hbar^{-d/2}\Big|\int e^{i\frac{p_0-p'_0}{\hbar}x}\phi_0(\hbar^{-1/2}(x-(x_0-x'_0))\big)\overline{\psi_0(\hbar^{-1/2}x)}\, dx\Big|\\
&=\big|\langle \t^1(\hbar^{-1/2}(z-z'))\phi_0,\psi_0)\rangle\big|.
\end{align*}
Observe that $\psi_0$ is a Schwartz function whose seminorms are dominated by $t\hbar^{-1/2}$, because $b_j(0,x)=0$ for $j\geq1$.
Hence
\[
I_{z,z'}\leq Ct\hbar^{1/2}(1+h^{-1/2}|z-z'|)^{-(2d+1)},
\]
for some constant $C=C(T)>0$. If we define $G(z)=Ct\hbar^{1/2}(1+h^{-1/2}|z|)^{-(2d+1)},$ we have
\[
\sum_{z\in  h^{1/2}\Lambda} G(z)=\sum_{\tilde{z}\in \Lambda} Ct\hbar^{-1/2}(1+|\tilde{z}|)^{-(2d+1)}\leq C't\hbar^{1/2}.
\]
The desired conclusion then follows if we choose $(C't\hbar^{1/2})^2\leq a/(2b)$.
\end{proof}

\section{Numerical Results}\label{sec5}
The aim of this section is to construct a parametrix of order $N = 0$  for an  Hamiltonian function $H(t,x,p)$ of the form \eqref{gen_Harm_osc},
where $$V\in\cC^{\infty}(\rd)\quad \mbox{with}\quad  |\partial_x^{\al}V(x)| \leq C_{\al}, \quad \forall |\alpha|\geq 2,\ x\in\rd,$$ so that {\bf Assumption (H)} (cf. \eqref{ipotesi}) is satisfied.\par
\subsection{Time Evolution of Gaussian Beams} In what follows we first recall the set of ordinary differential equations controlling the time evolution of the Gaussian beam for the quadratic time dependent Hamiltonian $H^{(2)}_{z_0}(t,X)$ defined in \eqref{Hquad}. The details and the proofs of what follows can be found in many works, we refer for instance  to \cite[Chapter 3]{ComberscureRobert2012}.
Let us represent the quadratic form $H^{(2)}_{z_0}(t,X)$, $X=(x,p)$, in \eqref{Hquad} as
$$H^{(2)}_{z_0}(t,X)=\frac 12 (K_t x\cdot x+2 L_t x\cdot p+G_t p\cdot p)
$$
where $K_t,L_t,G_t$ are real $d\times d$ matrices, $G_t,K_t$ being symmetric.
Setting
$$F(t)=\begin{pmatrix}K_t&L^T_t\\L_t&G_t\end{pmatrix},$$
where $L^T_t$ is the transpose matrix of $L_t$, the classical motion driven by the Hamiltonian $H^{(2)}_{z_0}$ is given by the Hamilton equation
\begin{equation}\label{He} \begin{pmatrix}\dot{x}\\ \dot{p}\end{pmatrix}=JF(t)\begin{pmatrix}{x}\\ {p}\end{pmatrix}.
\end{equation}
The classical flow $S_t(z_0)= \begin{pmatrix}A_t&B_t\\C_t&D_t\end{pmatrix}\in \rm{Sp}(d,\R)$ for the Hamiltonian $H^{(2)}_{z_0}(t,X)$ (see \eqref{metap}) satisfies
 $$ \dot{S}_t(z_0)= JF(t)S_t(z_0),\quad S_0=I_d.
 $$
 We now focus on the $\hbar$-dependent equation \eqref{metaph} and consider the Gaussian beam defined in \eqref{TE}. We start with the  ansatz
\begin{equation}\label{Riccati}
\sht(z_0)\phi^\hbar_0(x) = a(t) e^{ \frac{i}{2\hbar} \Gamma_t x\cdot x},
\end{equation}
where $\Gamma_t\in\Sigma_d$, the Siegel space of complex symmetric matrices $\Gamma$ such that $\Im \Gamma>0$ (for details we refer, e.g., to \cite[Chapter 5]{folland89}) and $a(t)$ is a complex valued time dependent function.
Observe that $\Gamma_t$ must satisfy a Riccati equation and $a(t)$ a linear differential equation. Indeed, imposing that the right-hand side $\psi_t(x)=a(t) e^{ \frac{i}{2\hbar} \Gamma_t x\cdot x}$ of \eqref{Riccati}  satisfies the equation
\begin{equation}\label{metaphf}
 i \hbar \partial_t \psi_t(x) =\widehat{H^{(2)}_{z_0}(t)} \psi_t(x),\quad \psi_0(x)=\phi_0^\hbar(x),
 \end{equation}
it follows that the matrices $\Gamma_t$ fulfill the following Riccati equation
\begin{equation}\label{Riccatigenerale}
\dot{\Gamma}_t = - K - \Gamma_t L_t - L^T_t\Gamma_t-\Gamma_tG\Gamma_t,
\end{equation}
 with the initial conditions $\Gamma_0= i I_d$,
 whereas the function $a(t)$ satisfies
 \begin{equation}\label{at}
 \dot{a}(t)=-\frac12 \Tr(L^T_t+G_t\Gamma_t)a(t)
 \end{equation}
  with initial condition $a(0)=(\pi \hbar)^{-d/4}$.
Let us introduce the  matrices  $M_t=A_t+iB_t$, $N_t=C_t+iD_t$ which are nonsingular (see \cite[Chapter 5]{folland89}) and fulfill by \eqref{He}
\begin{align}
\dot{M}_t&=L_t M_t+G_t N_t\label{Mt}\\
\dot{N}_t&=-K_t M_t-L^T_t N_t\label{Nt},
\end{align}
$M_0=I_d$, $N_0=i I_d$. Furthermore, it can be proved the equality
$$ \Gamma_t=N_t M_t^{-1}.$$
The solution of \eqref{at} can be computed easily. Indeed, using \eqref{Mt}, we observe that $$\Tr(L^T_t+G_t\Gamma_t)=\Tr(\dot{M}_t M_t^{-1})$$ and  the Liouville formula
 $$\frac {d}{dt} \log (\det M_t)=\Tr(\dot{M}_t M_t^{-1}),
 $$
yields at once the  solution
\begin{equation}\label{ampl}
a(t)=(\pi \hbar)^{-d/4} (\det M_t)^{-1/2}.
\end{equation}
Finally,  we rephrase \eqref{TE} as
\begin{equation}\label{TEbis} \phi^{\hbar,0}_{z_0}(t,x)=e^{\frac{i}{\hbar}\tilde{\delta}(t,z_0)} e^{\frac{i}{\hbar} p_t(x-x_t)}(\sht(z_0)\phi^\hbar_0)(x-x_t),\\
\end{equation}
where
$$\tilde{\delta}(t,z_0)=\frac12 x_0p_0+\int_0^t p_s \dot{x}_s-H(s,z_s)\,ds
$$
so that
\begin{equation}\label{deltat}\dot{\tilde{\delta}}(t,z_0)= p_t \dot{x}_t-H(t,z_t)=p_t \partial_pH(t,x_t,p_t)-H(t,x_t,p_t).
\end{equation}

\smallskip
We now come back to the example  \eqref{gen_Harm_osc} for which
$$ F(t)=\begin{pmatrix}\partial^2_x V(x_t)&0_d\\0_d&I_d\end{pmatrix}.
$$
The equations \eqref{HS}, \eqref{Mt}, \eqref{Nt}, \eqref{deltat} become
\begin{equation}\label{GB_eqt}
\begin{split}
\dot{x_t} &= p_t\hfill\\
\dot{p_t} &= -\partial_x V(x_t)\\
\dot{M}_t &{}= N_t
\\
\dot{N}_t &{}= -\partial^2_{x}V(x_t)M_t\\
\dot{\tilde{\delta}}(t,z_0) &{}= \frac{|p_t|^2}{2}-V(x_t).
\end{split}
\end{equation}
with the initial conditions
\[
\{x_0,p_0,  M_0 = I_d,  N_0 = iI_d,\tilde{\delta}(0,z_0) = \frac{p_0 x_0}{2}\}.
\]
These equations characterize the Gaussian beams \eqref{TEbis}.
\subsection{Construction of the parametrix}
We consider a $\hbar$-Gabor frame $\G^\hbar(\phi_0^\hbar,h^{1/2}\Lambda)$, with $\Lambda=\alpha \bZ^d\times \beta \bZ^d$, $\alpha,\beta>0$. Let $\gamma^h$ be a dual window in $\cS(\rd)$. Using Theorem \ref{mainteo}, an approximate solution with $N=0$ to the Cauchy problem
  \begin{equation}
\begin{cases} i \hbar
\partial_t u =  (V(x) -\frac{\hbar^2}{2}\Delta)u\\
u(0)=u_0,
\end{cases}
\end{equation}
is provided by the expansion \eqref{Gab_exp}:
\begin{equation}\label{Gab_exp0}
[U^{(0)}(t) u_0](t,x)=\sum_{z\in h^{1/2}\Lambda}\langle u_0,\th(z)\gamma^h\rangle \phi_z^{\hbar,0}(t,x).
\end{equation}
In the framework of nonlinear approximation, dealt in Theorem \ref{mainteo1},
we can fix a tolerance $\eta>0$, and consider the set:
\begin{equation}\label{thresh}
A_{\eta,u_0}=\{z\in h^{1/2}\Lambda: |\langle u_0,\th(z)\gamma^h\rangle|>\eta\}.
\end{equation}
In this case our parametrix becomes
\[
U^{(0)}_{\eta}(t)[u_0](t,x) =  \sum_{z\in A_{\eta,u_0}} c_z \phi_z^{\hbar,0}(t,x).
\]

\subsection{Algorithms}
We work in dimension $d=1$ and assume that the initial datum $u_0$ is a signal of length $L\in 2\bN$, defined on the periodized unit interval $I = [0,1]$.\\
The space and frequency grids are defined as
\[
X = \{0,1/L,\ldots (L-1)/L\}, \quad \Omega = \{-L/2,-(L-1)/2\ldots 0\ldots L/2-1\},
\]
(with periodic boundary conditions). For details and a complete exposition on the parameters involved in the algorithms we address to the LTFAT documentation in http://ltfat.sourceforge.net/.
\begin{Alg}[Coefficients, via Discrete Gabor Transfrom] \normalfont{Consider a signal $f$ of length $L$.

\begin{enumerate}
\item[1.] Define $a>0$ to be the length of the time shift. The space translation is $a/L$.
\item[2.] Define $M>0$ to be the number of channels. The frequency translation will be of length $L/M$. In order to have a frame, the density needs to be less than $1$, hence  the parameters $a$ and $M$ must be chosen such that $a M <1$.
\item[3.] Compute the dilated gaussian window $g=(\pi \hbar)^{-1/4}e^{-x^2/(2\hbar)}$ (recall $d=1$). There is a LTFAT routine available, which gives a periodic normalized Gaussian: $g =$ \textbf{pgauss}$(L,L h\pi)$.
\item[4.] Calculate the dual window. The LTFAT command is $g_d =$\textbf{gabdual}$(g,a,M,L)$.
\item[5.] Calculate the coefficients $c$ of the signal $f$ using the Discrete Gabor Transform $c=$\textbf{dgt}$(f,g_d,a,M)$.
By default it gives the following:
    \[
        c(m,n) = \sum_{l = 0}^{L-1} f(l) \overline{g_{d}(l-an)} e^{-2\pi i l m/M},
    \]
for $n = 0,\ldots, L/a-1, \: m = 0,\ldots,M-1$. Since, by \eqref{TEbis}, we need the expansion
    \[
        c(m,n) = \sum_{l = 0}^{L-1} f(l) \overline{g_{d}(l-an)} e^{-2\pi i(l-a n) m/M},
    \]
we apply the routine \textbf{phaselock}, i.e. $c = $\textbf{phaselock}$(c)$.
\end{enumerate}}
\label{Al:GabCoeff}
\end{Alg}
We can use this algorithm as initial step for our procedure.
\begin{Alg}[Gaussian Beam, standard setting]
\normalfont{Consider the initial condition $u_0$ of length $L$.
\begin{enumerate}
\item[1.] Use Algorithm \ref{Al:GabCoeff} to compute the coefficients $c$ of $u_0$.
\item[2.] Set a threshold $\eta>0$, using \textbf{thresh}$(c, \eta)$.
\item[3.] Set the initial values for the ODEs (cf.\eqref{GB_eqt}). The initial displacement is $x_0=a n/L$ where, for symmetry reasons, \begin{equation}\label{rangen} n =-L/2+1,\ldots,-1,0,1, \ldots,L/2.\end{equation} This is not restrictive, since the ODE is solved modulus one.
 The initial momentum  is $p_0=2\pi h m L/M$, with \begin{equation}\label{rangem}m = -M/2 +1,\ldots,-1,0,1 \ldots, M/2.
   \end{equation} Set $$\tilde{\delta}(0,z_0)=(a n \pi h m) /M,\, \,M_0 =1,\,\, N_0 =i.$$
 To obtain the normalization of \textbf{pgauss} we set $a(0) = g(L)$.
\item[4.] Solve the set of ODEs for each $(m,n)$, where $m$ ranges over the integers in \eqref{rangem} and $n$ over the integers in \eqref{rangen}.  This can be done in MATLAB using a standard solver. In our implementation we used \textbf{ode45}.
\item[5.] Construct the solution $U^{(0)}_{\eta}(t)[u_0](t,x)$ using the \textbf{exp} function of MATLAB.
\end{enumerate}}
\label{Alg:GB_ODe}
\end{Alg}
\subsubsection{Large-time behavior}
The unique feature of Gaussian Beams, or nearly coherent states, is that they are effective even in the presence of caustics.
Nevertheless, in certain cases, the large-time behavior can be troublesome. Precisely, the smallest eigenvalues of the matrix $\Gamma_t$ can drop quickly and the corresponding  Gaussian in \eqref{Riccati} starts to spread. This leads to a drop of quality in our  solution. This phenomena is studied in \cite{Lex2} in dimension $d=1$. The authors  relate the sign of the Hessian of the potential $V(x)$ to the spreading of the Gaussian in time. When the Hessian is positive, i.e. the so-called \textbf{potentil well}, the matrix $\Gamma_t$ is bounded. When it is constant, $\Gamma_t$ shows a linear decay in time and when the Hessian is negative,  $\Gamma_t$ decays exponentially. The latter case is named \textbf{potential hill} and it needs to be treated very carefully.
 Although this treatment is far from being conclusive for the general case, it suits our $1-$dimensional problem. Hence, we follow their idea of \emph{reinitialization}.\par
  We monitor the decay of $\Gamma_t$ and as soon as it drops under a certain tolerance, we stop the propagation at time $\overline{t}$, say. We then compute the solution $u_{\overline{t}}$ and  use it as initial value for the evolution in the time interval $[\,\overline{t},T]$. Let us describe the algorithm in detail.

\begin{Alg}[Gaussian Beam, reinitialization]
\label{Alg:GB_Re}

\normalfont{Consider the initial condition $u_0$ of lenght $L$.
\begin{enumerate}
\item[1.] Set $U_r = u_0$, where $u_0$ is the initial value as before.%
\item[2.] Compute the Gabor coefficients of $U_r$, using Algorithm \ref{Al:GabCoeff}.
\item[3.] Solve the ODEs, setting the initial values as in Algorithm \ref{Alg:GB_ODe}. Use \textbf{`Events'} inside \textbf{odeset} to monitor the decay of the fourth component. With this command, we can set a tolerance and, if the matrix $\Gamma_t$ drops below that, the computation of \textbf{ode45} stops and return the time $\overline{t}$ together with the solutions at that time.
\item[4.] Construct the solution $U^{(0)}_{\eta}(t)[u_0](t,x)$ at time $t = \overline{t}$.\\
\item[5.] Set  $U_r = U^{(0)}_{\eta}(\overline{t})[u_0](\overline{t},x)$ and execute again steps 1-4 until $\overline{t}$ reaches the final time.
\end{enumerate}}
\end{Alg}
\subsection{Numerical Results}
The problems we consider are the ones presented in \cite{Lex2} and, for sake of consistency,  the same comparison method is used, i.e. the  Strang Splitting pseudo-spectral method \cite{MR1880116,MR2047194}.
\subsubsection{Potential well}
We consider the potential
\begin{equation}\label{es1}
V(x) = \cos(2\pi x),
\end{equation}
treated in the Example $1$ in \cite[Subsec. 6.1.1]{Lex2} and set the initial values
\begin{equation}\label{In_dat}
\begin{split}
u_0(x) &{}= {{ e}^{-25\, \left( x- 0.5 \right) ^{2}}}{{ e}^{\frac{i\tau_0(x)}{h}}}\\
\tau_0(x)&{}=-\frac{1}{5}\log(e^{(5(x-0.5))}+e^{-5(x-0.5)});
\end{split}
\end{equation}
with signal length $L=1024$ and Plank constant $h=\frac{1}{256\pi}$.
We also set a threshold for the coefficients (cf. \eqref{thresh}) taking the ones with absolute value greater than $\eta = 0.01$. With this tolerance, our initial reconstruction is still accurate. Indeed, at time $t=0$ the relative error is just of order $0.3\%$, similar to the one  in the Example $1$ in \cite[Subsec. 6.1.1]{Lex2}.\par
 The potential \eqref{es1} is non negative for $x\in [0.25,0.75]$. So we expect the solution to be accurate inside this interval, since the beam width is constant. This is consistent with the results of Figure \ref{fig:well}, where no reinitialization is needed.\\

\begin{figure}
        \centering
        \begin{subfigure}[b]{0.4\textwidth}
        \includegraphics[width=\textwidth]{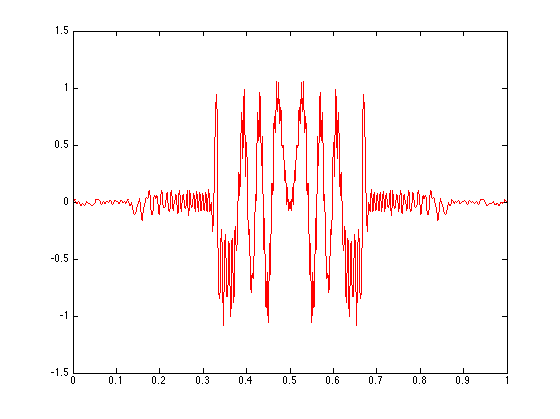}
        \caption{Strang-Splitting solution}
        \end{subfigure}
         \begin{subfigure}[b]{0.4\textwidth}
                \includegraphics[width=\textwidth]                {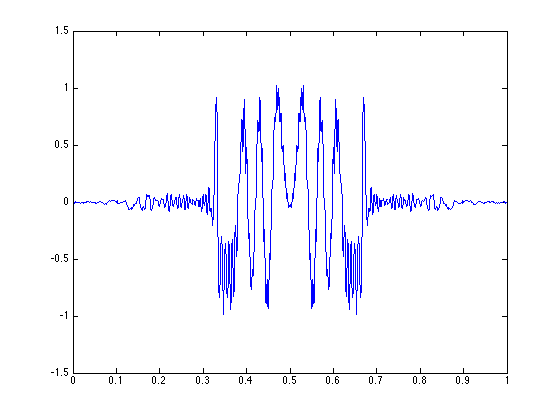}
                \caption{Beam solution}
                        \end{subfigure}
         \begin{subfigure}[b]{0.4\textwidth}
                \includegraphics[width=\textwidth]{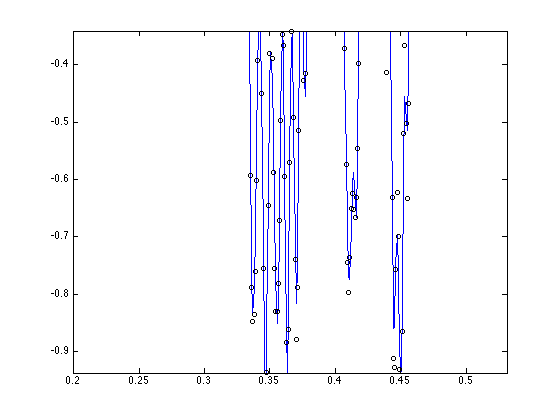}
                \caption{Comparison, center.}
                        \end{subfigure}
                        \begin{subfigure}[b]{0.4\textwidth}
                \includegraphics[width=\textwidth]{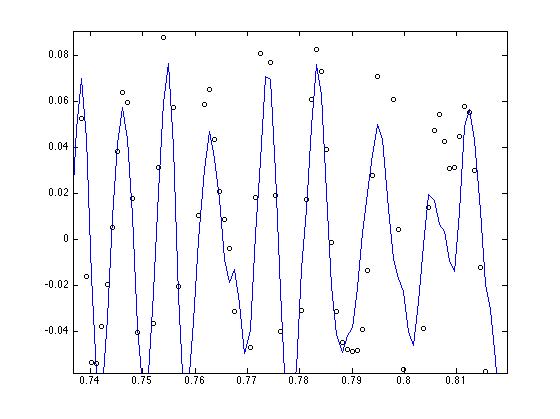}
        \caption{Comparison, side.}\end{subfigure}
        \caption{Potential $V(x) = \cos(2\pi x)$. (A) The exact solution. (B) The beam solution. (C) and (D) Comparison of the exact  and beam solution.   ``$\circ$": the real\ solution; ``$-$": the beam solution.}\label{fig:well}
\end{figure}
We can push our analysis a little bit further. In Figure \ref{fig:IV} our initial values are represented by the picture on the left-hand side, whereas on the right-hand side we show the same function multiplied by a narrower Gaussian. This provides an ``almost'' compactly supported function inside the interval $[0.25,0.75]$ where we have the potential well. In this case we obtain a $35\%$ drop on the relative error. The solution is shown in Figure \ref{fig:well_CSP}, which contains a comparison of the exact and beam solution.
\begin{figure}
        \centering
        \begin{subfigure}[b]{0.4\textwidth}
        \includegraphics[width=\textwidth]{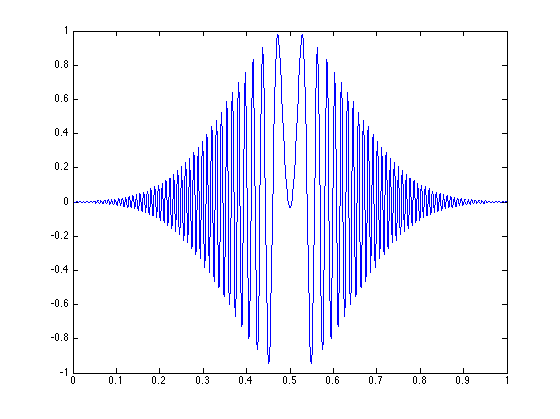}
        \end{subfigure}
         \begin{subfigure}[b]{0.4\textwidth}
        \includegraphics[width=\textwidth]{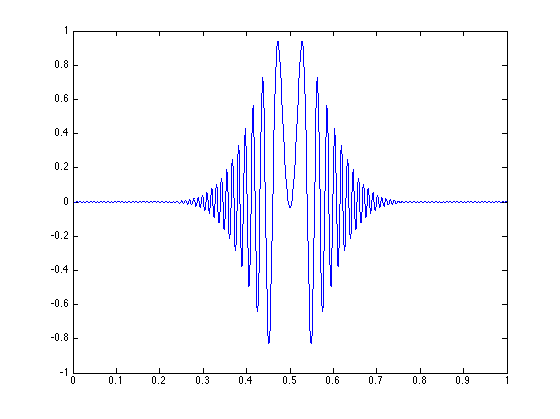}
        \end{subfigure}
\caption{Initial values. The right one is numerically supported in $[0.25, 0.75]$}
\label{fig:IV}
\end{figure}
\begin{figure}
        \centering
        \begin{subfigure}[b]{0.4\textwidth}
        \includegraphics[width=\textwidth]{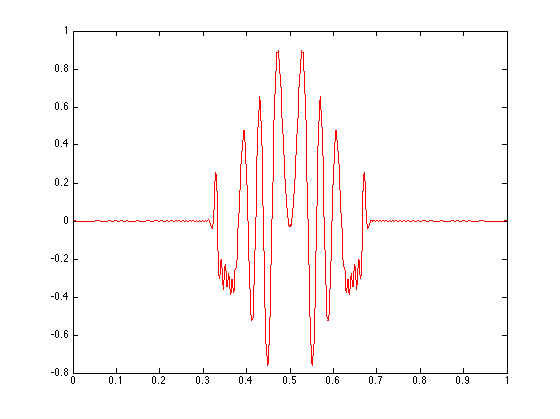}
        \caption{Strang-Splitting solution}
        \end{subfigure}
         \begin{subfigure}[b]{0.4\textwidth}
                \includegraphics[width=\textwidth]                {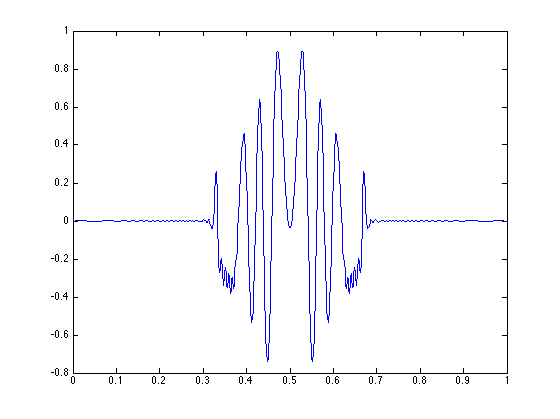}
                \caption{Beam solution}
                        \end{subfigure}
         \begin{subfigure}[b]{0.4\textwidth}
                \includegraphics[width=\textwidth]{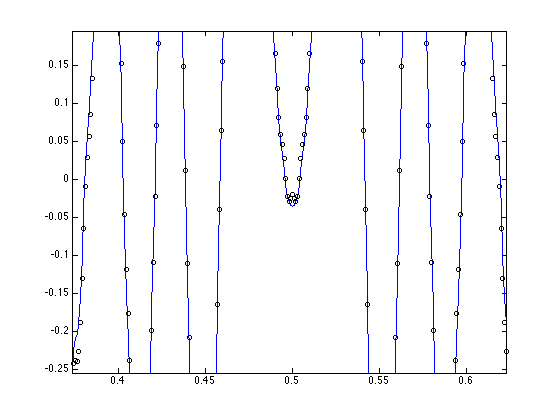}
                \caption{Comparison, center.}
                        \end{subfigure}
                        \begin{subfigure}[b]{0.4\textwidth}
                \includegraphics[width=\textwidth]{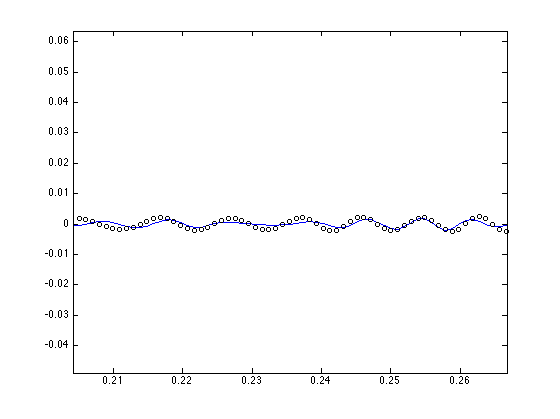}
        \caption{Comparison, side.}\end{subfigure}
        \caption{Potential $V(x) = \cos(2\pi x)$, compactly supported initial values. (A) The exact solution. (B) The beam solution. (C) and (D) Comparison of the exact  and beam solution.   ``$\circ$": the real\ solution; ``$-$": the beam solution.}\label{fig:well_CSP}
\end{figure}
\subsubsection{A potential Hill}
Consider the initial value to be \eqref{In_dat}, we take the signal length to be $L=1024$ and the Plank constant $h=\frac{1}{256\pi}$, as above.
We also set the same threshold  $\eta = 0.01$.
We consider the potential $V(x) = \cos(2\pi(x+0.5))$, whose Hessian is non-positive for $x \in [0.25,0.75]$. This yields unacceptable numerical results for the standard algorithm, as shown in Figure \ref{fig:hill}.
\begin{figure}
        \centering
        \begin{subfigure}[b]{0.4\textwidth}
        \includegraphics[width=\textwidth]{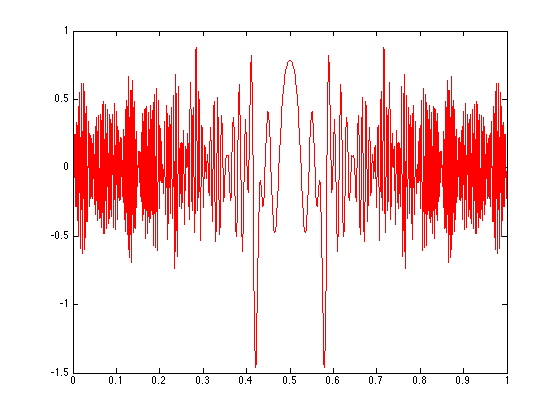}
        \caption{Strang-Splitting solution}
        \end{subfigure}
         \begin{subfigure}[b]{0.4\textwidth}
                \includegraphics[width=\textwidth]                {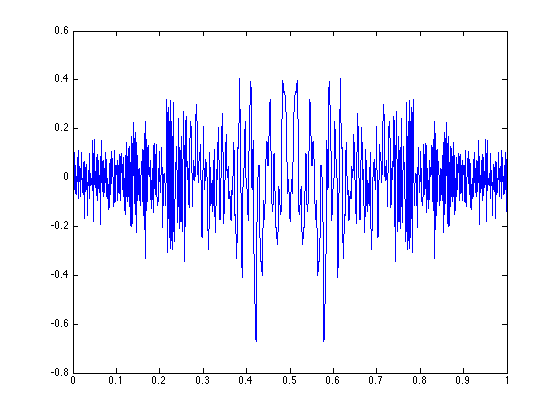}
                \caption{Beam solution}
                        \end{subfigure}
         \begin{subfigure}[b]{0.4\textwidth}
                \includegraphics[width=\textwidth]{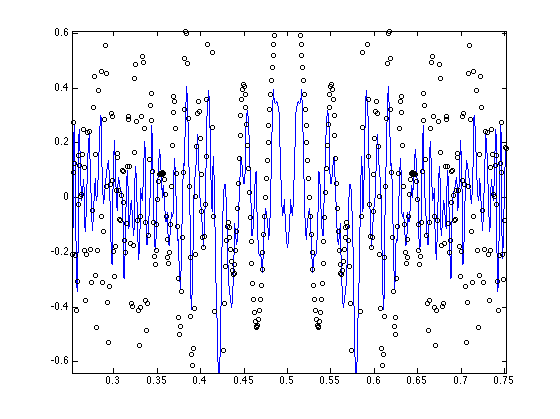}
                \caption{Comparison, center.}
                        \end{subfigure}
                        \begin{subfigure}[b]{0.4\textwidth}
                \includegraphics[width=\textwidth]{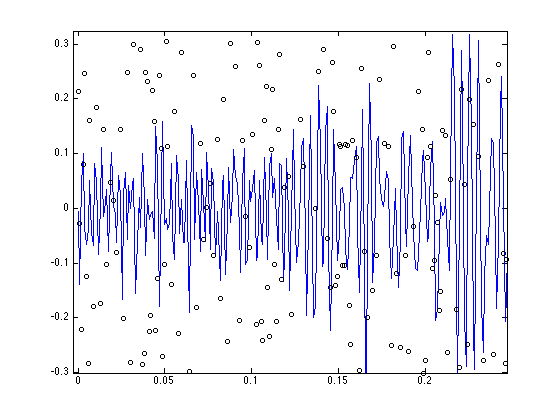}
        \caption{Comparison, side.}\end{subfigure}
        \caption{Potential $V(x) = \cos(2\pi (x+0.5))$. (A) The exact solution. (B) The beam solution. (C) and (D) Comparison of the exact  and beam solution.   ``$\circ$": the real\ solution; ``$-$": the beam solution.}\label{fig:hill}
\end{figure}
If we use the reinitialization algorithm, then our approximation improves a lot, see Figure \ref{fig:hillRE}. In this case we subdivide the time interval $[0,2]$ in eight uniform subintervals.
\begin{figure}[ht!]
        \centering
        \begin{subfigure}[b]{0.4\textwidth}
        \includegraphics[width=\textwidth]{Strang2.png}
        \caption{Strang-Splitting solution}
        \end{subfigure}
         \begin{subfigure}[b]{0.4\textwidth}
                \includegraphics[width=\textwidth]                {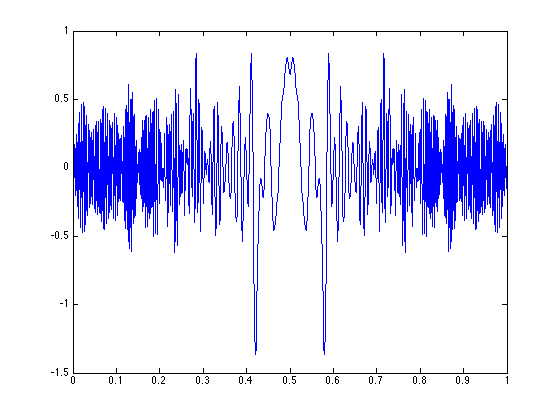}
                \caption{Beam solution}
                        \end{subfigure}
         \begin{subfigure}[b]{0.4\textwidth}
                \includegraphics[width=\textwidth]{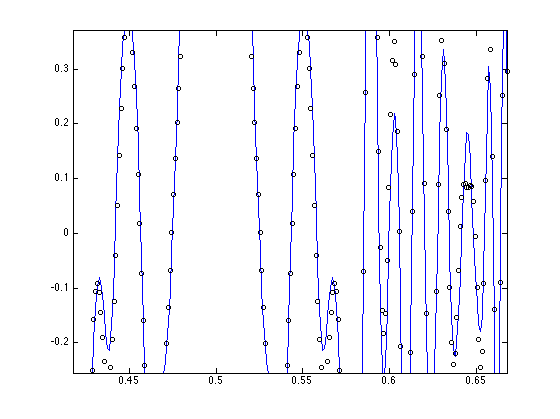}
                \caption{Comparison, center.}
                        \end{subfigure}
                        \begin{subfigure}[b]{0.4\textwidth}
                \includegraphics[width=\textwidth]{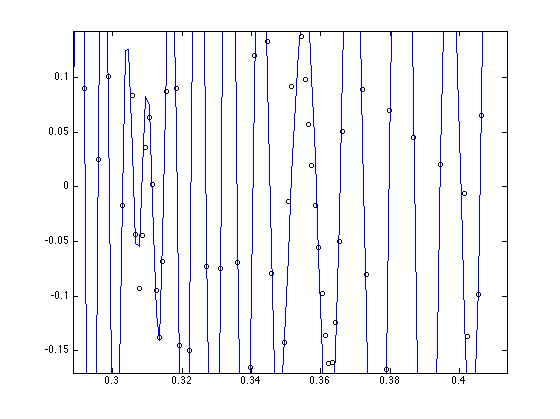}
        \caption{Comparison, side.}\end{subfigure}
        \caption{Potential $V(x) = \cos(2\pi (x+0.5))$ with reinitialization. (A) The exact solution. (B) The beam solution. (C) and (D) Comparison of the exact  and beam solution.   ``$\circ$": the real\ solution; ``$-$": the beam solution.}\label{fig:hillRE}
\end{figure}
 \begin{figure}
 \centering
 \includegraphics[width=0.6\textwidth]{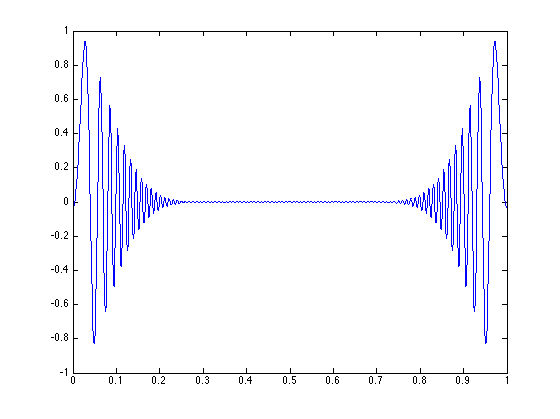}
        \caption{Shifted initial value}
\label{fig:ffts}
\end{figure}
We can try to do as before and chose an initial value which is compactly supported in $[0,0.25]\cup[0.75,1]$, i.e. where the Hessian is non negative. The easiest way to do it is to shift the initial value used above by half the length of the signal, see Figure \ref{fig:ffts}.
Then, as shown in Figure \ref{fig:T2}, even without any reinitialization we get a perfect reconstruction, as expected. We notice that this is nothing but the first case shifted by half the wavelength.
\begin{figure}[ht!]
        \centering
        \begin{subfigure}[b]{0.4\textwidth}
        \includegraphics[width=\textwidth]{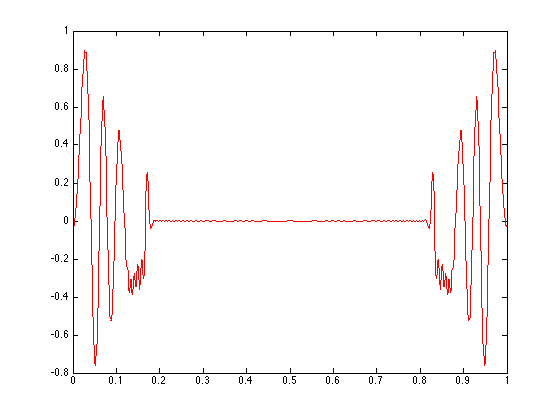}
        \caption{Strang-Splitting solution}
        \end{subfigure}
         \begin{subfigure}[b]{0.4\textwidth}
                \includegraphics[width=\textwidth]                {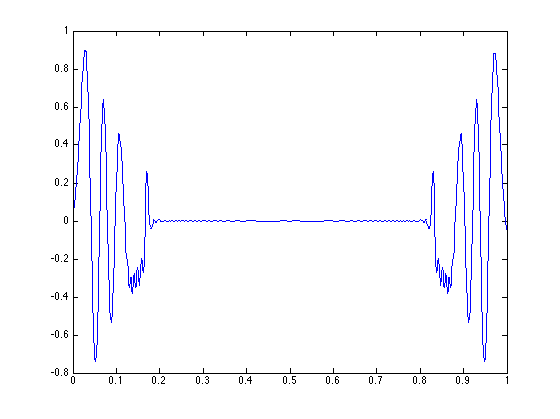}
                \caption{Beam solution}
                        \end{subfigure}
\caption{Potential $V(x) = \cos(2\pi (x+0.5))$, compactly supported initial values.}
\label{fig:T2}
\end{figure}
\subsubsection{A potential hill and well}
Consider the initial value to be \eqref{In_dat}, we take the signal length to be $L=1024$ and the Plank constant $h=\frac{1}{256\pi}$, as above.
We also set the same threshold  $\eta = 0.01$.\\
We take $V(x) = 10 +\sin(2\pi(x+0.5))$, whose Hessian is non-positive for $x \in [0,0.5]$. Once again, the numerical results without reinitialization are far from being consistent, see Figure \ref{fig:hillwell}.
\begin{figure}
        \centering
        \begin{subfigure}[b]{0.4\textwidth}
        \includegraphics[width=\textwidth]{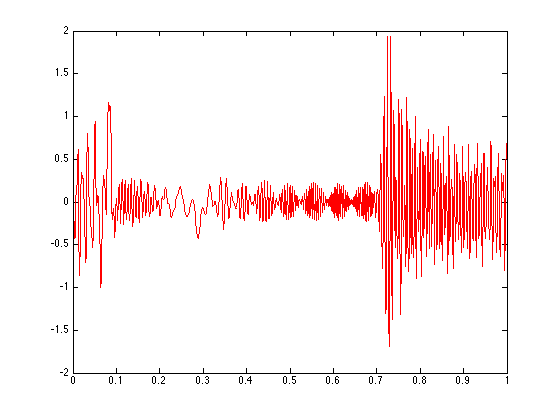}
        \caption{Strang-Splitting solution}
        \end{subfigure}
         \begin{subfigure}[b]{0.4\textwidth}
                \includegraphics[width=\textwidth]                {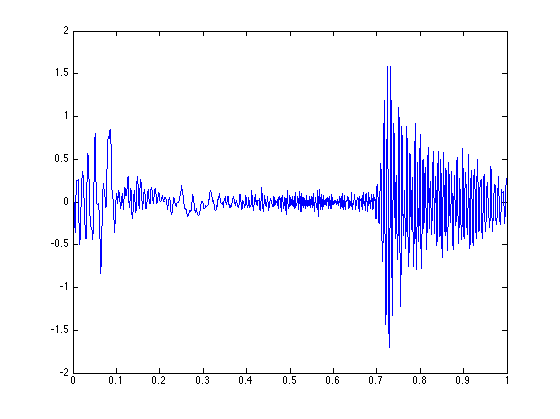}
                \caption{Beam solution}
                        \end{subfigure}
         \begin{subfigure}[b]{0.4\textwidth}
                \includegraphics[width=\textwidth]{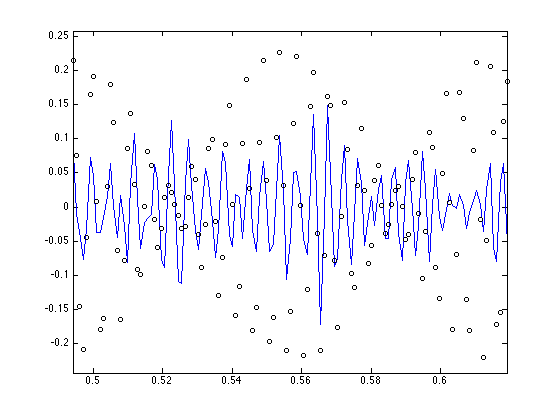}
                \caption{Comparison, center.}
                        \end{subfigure}
                        \begin{subfigure}[b]{0.4\textwidth}
                \includegraphics[width=\textwidth]{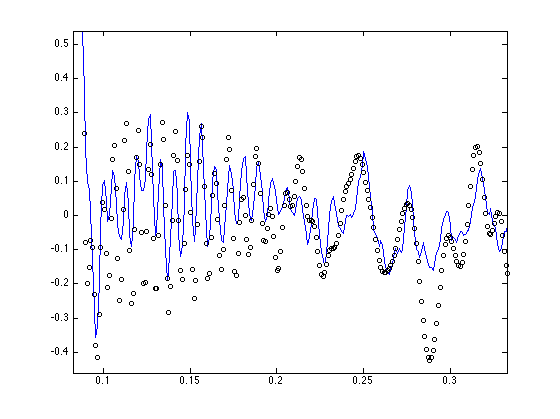}
        \caption{Comparison, side.}\end{subfigure}
        \caption{Potential $V(x) = 10 +\sin(2\pi(x+0.5))$. (A) The exact solution. (B) The beam solution. (C) and (D) Comparison of the exact  and beam solution.   ``$\circ$": the real\ solution; ``$-$": the beam solution.}\label{fig:hillwell}
\end{figure}
If we use the reinitialization, our results improve greatly as shown in Figure \ref{fig:hillwell_re}.\medskip\\
\begin{figure}[ht!]
        \centering
        \begin{subfigure}[b]{0.4\textwidth}
        \includegraphics[width=\textwidth]{Strang3.png}
        \caption{Strang-Splitting solution}
        \end{subfigure}
         \begin{subfigure}[b]{0.4\textwidth}
                \includegraphics[width=\textwidth]                {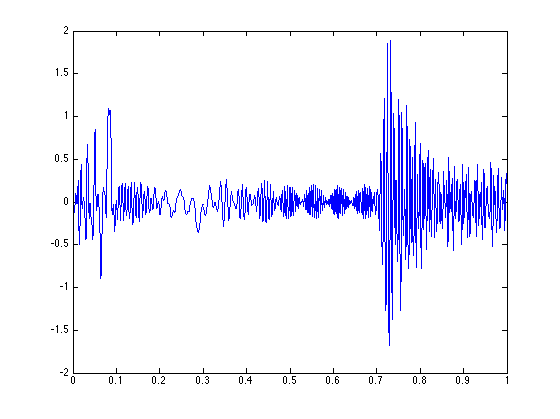}
                \caption{Beam solution}
                        \end{subfigure}
         \begin{subfigure}[b]{0.4\textwidth}
                \includegraphics[width=\textwidth]{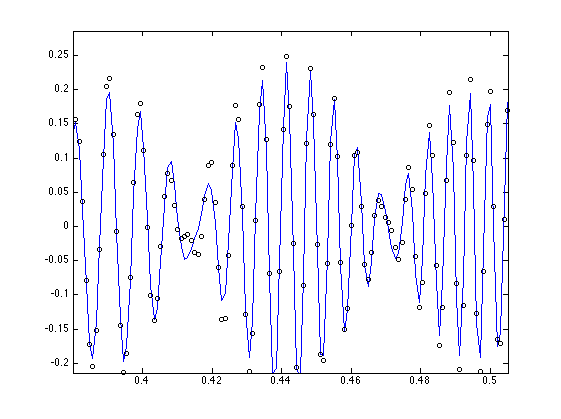}
                \caption{Comparison, center.}
                        \end{subfigure}
                        \begin{subfigure}[b]{0.4\textwidth}
                \includegraphics[width=\textwidth]{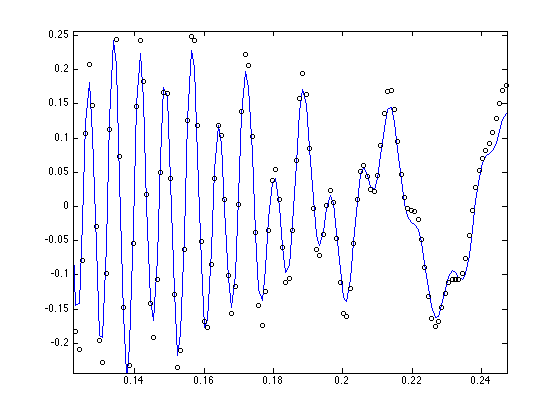}
        \caption{Comparison, side.}\end{subfigure}
        \caption{Potential $V(x) = 10+\sin(2\pi (x+0.5))$ with reinitialization. (A) The exact solution. (B) The beam solution. (C) and (D) Comparison of the exact  and beam solution.   ``$\circ$": the real\ solution; ``$-$": the beam solution.}\label{fig:hillwell_re}
\end{figure}
We can try a similar trick as before and pick the usual compactly supported window shifted in the interval $[0.5,1]$, once again the results are very good, see Figure \ref{fig:T3}.

\begin{figure}[ht!]
        \centering
        \begin{subfigure}[b]{0.4\textwidth}
        \includegraphics[width=\textwidth]{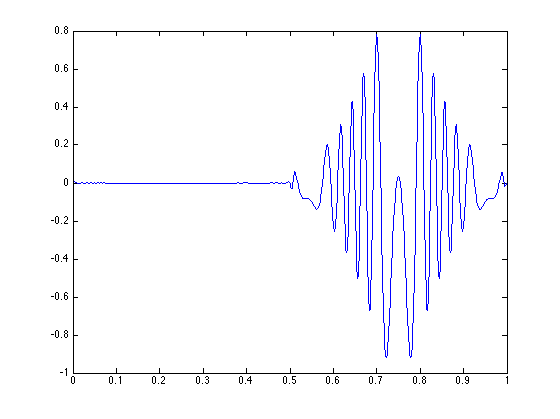}
        \caption{Strang-Splitting solution}
        \end{subfigure}
         \begin{subfigure}[b]{0.4\textwidth}
                \includegraphics[width=\textwidth]                {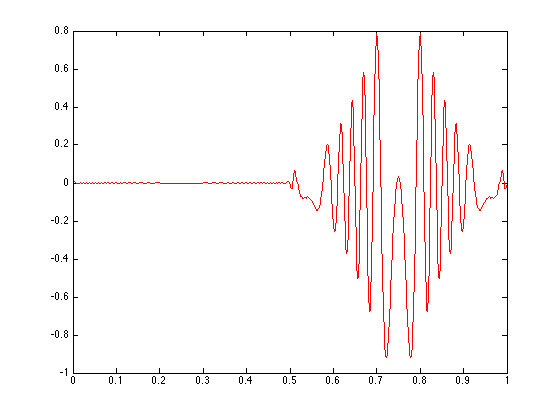}
                \caption{Beam solution}
                        \end{subfigure}
\caption{Potential $V(x) = 10 + \sin(2\pi (x+0.5))$, compactly supported initial values.}
\label{fig:T3}
\end{figure}
\subsection{Future Works}
The LTFAT package provides a 2-dimensional discrete Gabor transform (\textbf{dgt2}), although the \textbf{phaselock} command is not yet available. Nevertheless, it seems possible to implement a multidimensional algorithm using the same approach. We plan to develop such a command and  then investigate the two and  three-dimensional cases.\\
\section*{Acknowledgments}
We would like to thank Lexing Ying and Janling Qian for the useful discussions on the algorithms. We also thank Nicki Holigaus for his help in the use of the LTFAT package. \par The first three authors were partially supported by  the Gruppo
Nazionale per l'Analisi Matematica, la Probabilit\`a e le loro
Applicazioni (GNAMPA) of the Istituto Nazionale di Alta Matematica
(INdAM).

\addcontentsline{toc}{chapter}{References}
\bibliography{Bib_GB_Schr}
\bibliographystyle{abbrv}
\end{document}